\definecolor{myurlcolor}{rgb}{0.6,0,0}
\definecolor{mycitecolor}{rgb}{0,0,0.8}
\definecolor{myrefcolor}{rgb}{0,0,0.8}
\newtheorem{theorem}{Theorem}[section]
\newtheorem{lemma}[theorem]{Lemma}
\newtheorem{proposition}[theorem]{Proposition}
\theoremstyle{definition}
\newtheorem{definition}[theorem]{Definition}
\newtheorem{example}[theorem]{Example}
\theoremstyle{remark}
\DeclareMathOperator{\Hom}{Hom}
\newcommand*\bigcdot{\mathpalette\bigcdot@{.6}}
\newcommand*\bigcdot@[2]{\mathbin{\vcenter{\hbox{\scalebox{#2}{$\m@th#1\bullet$}}}}}
\newcounter{BWYtable}
\newcounter{BWYDiagram}
\newcounter{BWYFigure}
\renewcommand{\1}{1}
\renewcommand{\2}{2}
\newcommand{\3}{3}
\newcommand{\4}{4}
\newcommand{\C}{\mathscr{C}}
\renewcommand{\(}{\left(} 
\renewcommand{\)}{\right)}
\newcommand{\catname}[1]{{\mathsf{#1}}} 
\newcommand{\Set}{\catname{Set}}
\newcommand{\Dif}{{\catname{Diff}}}
\newcommand{\SurjSub}{{\catname{SurjSub}}}
\newcommand{\Riem}{{\catname{RiemSurj}}}
\newcommand{\SympSurj}{{\catname{SympSurj}}}
\newcommand{\Top}{{\catname{Top}}}
\newcommand{\F}{\mathcal F}
\newcommand{\SA}{S_A}
\newcommand{\SL}{S_L}
\newcommand{\SR}{S_R}
\newcommand{\sL}{s_L}
\newcommand{\sR}{s_R}
\newcommand{\tsL}{\tilde{s}_L}
\newcommand{\tsR}{\tilde{s}_R}
\newcommand{\TA}{T_A}
\newcommand{\TL}{T_L}
\newcommand{\TR}{T_R}
\newcommand{\tL}{t_L}
\newcommand{\tR}{t_R}
\newcommand{\CA}{C_A}
\newcommand{\CL}{C_L}
\newcommand{\CR}{C_R}
\newcommand{\cL}{c_L}
\newcommand{\cR}{c_R}
\newcommand{\tcL}{\tilde{c}_L}
\newcommand{\tcR}{\tilde{c}_R}
\newcommand{\QA}{Q_A}
\newcommand{\QL}{Q_L}
\newcommand{\QR}{Q_R}
\newcommand{\qL}{q_L}
\newcommand{\qR}{q_R}
\newcommand{\PA}{P_A}
\newcommand{\PL}{P_L}
\newcommand{\PR}{P_R}
\newcommand{\pL}{p_L}
\newcommand{\pR}{p_R}
\renewcommand{\d}{{\rm d}}
\begin{document}

\title{Constructing Span Categories From Categories Without Pullbacks}


\author{David Weisbart$^1$ \and Adam M. Yassine$^2$}
\address{
\begin{tabular}[h]{cc}
 $^{1}$Department of Mathematics &  $^{2}$Department of Mathematics\\
  University of California, Riverside &  Bowdoin College
  \end{tabular}
  }
\email{$^1$weisbart@math.ucr.edu} \email{$^2$a.yassine@bowdoin.edu}

\maketitle

\begin{flushright}
In memory of Professor V.S.Varadarajan.
\end{flushright}

\pagestyle{plain}

\begin{abstract}
Span categories provide an abstract framework for formalizing mathematical models of certain systems.  The mathematical descriptions of some systems, such as classical mechanical systems, require categories that do not have pullbacks, and this limits the utility of span categories as a formal framework.  Given categories $\C$ and $\C^\prime$ and a functor $\F$ from $\C$ to $\C^\prime$, we introduce the notion of an $\F$-pullback of a cospan in $\C$, as well as the notion of span tightness of $\F$. If $\mathcal F$ is span tight, then we can form a generalized span category ${\rm Span}(\C,\F)$ and circumvent the technical difficulty of $\C$ failing to have pullbacks.  Composition in ${\rm Span}(\C,\F)$ uses $\F$-pullbacks rather than pullbacks and in this way differs from the category ${\rm Span}(\C)$, but reduces to it when both $\C$ has pullbacks and $\F$ is the identity functor.  
\end{abstract}


\tableofcontents

\section{Introduction}\label{sec:intro}

The last few decades have seen significant application of category theory to modeling systems.  The basic idea is to model a system by a morphism in an appropriate category, where composition of morphisms describes the assembly of systems into larger or more complicated systems.  Earlier works have used span and cospan categories to study the composition of physical systems.  For example, Baez and Pollard \cite{Baez-Poll} used cospans to study reaction networks.  Haugseng \cite{Hau} used spans to study classical topological field theories.   A span in a category $\C$ is a pair of morphisms in $\C$ with the same source.  B\'enabou \cite[p.\;22]{Ben} credits Yoneda \cite{Yon} with introducing the notion of a span in the category of categories. B\'enabou \cite{Ben} proved that if $\C$ is a category with pullbacks, then there is a bicategory, ${\rm Span}\!\(\C\)$, whose objects, morphisms, and $2$-morphisms are the respective objects, spans, and maps of spans in $\C$.  View this bicategory as a category, a \emph{span category}, by ignoring the bicategory structure and taking isomorphism classes of spans in $\C$, that Section~\ref{sec:physsys} defines, as the morphisms.  Leinster \cite[p.\,26]{Lein} discussed bicategories and B\'enabou's work.\nobreak

Denote by $\Set$ the category whose objects are sets, whose morphisms are functions between sets, and where composition of functions defines the composition of morphisms.  Suppose that $X$, $Y$, and $Z$ are sets and that $f$ and $g$ are functions that respectively map $X$ and $Y$ to the set $Z$.  Take $\rho_X$ and $\rho_Y$ to be the canonical projections \[\rho_X\colon X\times Y \to X\quad {\rm and}\quad \rho_Y\colon X\times Y \to Y,\] and take $\pi_X$ and $\pi_Y$ to be the respective restrictions of $\rho_X$ and $\rho_Y$ to the fibered product $X\times_Z Y$, the subset of $X\times Y$ consisting of all elements on which $f$ is equal to $g$.  Section~\ref{sec:spantight} discusses the universal properties of the fibered product in $\Set$.  These properties give a prescription for composing certain spans in $\Set$ in a way that seems strikingly similar to the way in which classical mechanical systems appear to compose.   

Baez initiated the current line of research by proposing that the study of classical mechanics might have a foundation in category theory.  In particular, he suggested that classical mechanical systems could be morphisms in an appropriate span category, where composition of morphisms using fibered products would describe the composition of the systems \cite{JBPC}.  However, modeling classical mechanical systems necessitates working with spans in categories other than $\Set$, where the fibered product lacks the universal properties that it has in $\Set$. The current paper provides a way to circumvent this technical obstruction by generalizing the construction of a span category.  Generalized span categories differ from span categories in the way that they define composition.  The composite of spans may not be defined in a given category $\C$ because composition requires the existence of pullbacks and $\C$ may not have pullbacks.  The rough idea is to map $\C$ by a functor $\F$ into another category $\C^\prime$.  The composite of two spans in $\C$ will be a span in $\C$ with the property that its image under $\F$ is a composite in $\C^\prime$ of the the images under $\F$ of the spans being composed in $\C$.  If the functor $\F$ satisfies a certain property, namely that it is span tight, then composition will be well defined for isomorphism classes of spans in $\C$.

Section~\ref{sec:physsys} defines a span, a cospan, and an isomorphism class of spans.  The language it employs is arguably nonstandard from a category theorist's perspective but we have found it helpful both for presenting the results to nonspecialists in category theory and for our specific applications.  Section~\ref{sec:nopb} proves that some categories that are important in the study of classical mechanics do not have pullbacks, which leads to the present goal of generalizing the definition of a span category. Denote by $\Dif$ the category whose objects are smooth manifolds and whose morphisms are smooth functions.  Since two submanifolds of a given manifold may not intersect transversally, the fibered product of two manifolds is not necessarily an embedded submanifold of their product.  For reasons related to this, $\Dif$ does not have pullbacks. This technical difficulty that Spivak \cite{Spiv2} encountered parallels a central technical difficulty of the current paper.

Suppose that $\C$ and $\C^\prime$ are categories and $\F$ is a functor from $\C$ to $\C^\prime$.  Section~\ref{sec:spantight} defines an $\F$-pullback of a cospan in $\C$ and the span tightness of the functor $\mathcal F$, as well as the composite of two spans along an $\F$-pullback.  While the notion of an $\F$-pullback generalizes the notion of a pullback in a way that is sufficient for the current setting, without an additional condition on $\mathcal F$ it is not enough to provide a method for composing isomorphism classes of spans.  Section~\ref{sec:genSpan} proves that if the functor $\mathcal F$ is span tight, then there exists a category ${\rm Span}(\C,\F)$ whose objects are the objects in $\C$ and whose morphisms are isomorphism classes of spans in $\C$. Composition in this generalized span category is defined using $\F$-pullbacks and appears to depend on the functor $\F$.   In \cite{BWY}, we apply the tools that we develop to the study of classical mechanics.  Section~\ref{sec:app} briefly discusses some examples of generalized span categories.

\subsection*{Acknowledgements}

We thank Professor John C. Baez for initially suggesting this project to us, for his valuable guidance, and for his enlightening instruction.  We thank him as well for helping us to understand his motivation for this project and the orientation of this current project in his larger research program. We thank Professors Daniel Cicala and Christina Vasilakopoulou for many insightful discussions on category theory.  

We dedicate this work to Professor V.\,S.\,Varadarajan, a dear mentor and friend, whose memory we will always treasure.

\section{Span Categories}\label{sec:physsys}

\subsection{Spans and their Isomorphism Classes}

A \emph{span} in a category $\C$ is a pair of morphisms in $\C$ with the same source and a \emph{cospan} in $\C$ is a pair of morphisms in $\C$ with the same target. For any span $S$ in $\C$, write \[S = \(\sL, \sR\),\] where $\SL$, $\SR$, and $\SA$ are objects in $\C$, \[\sL\colon \SA\to \SL, \quad {\rm and} \quad \sR\colon \SA\to \SR.\] Utilize the same notation if $S$ is a cospan, but where $s_L$ and $s_R$ map $\SL$ and $\SR$ to $\SA$, respectively.   The span $S$ and the cospan $C$ are these respective diagrams:

\medskip

\begin{center} 
\begin{tikzpicture}[->=stealth',node distance=2.25cm, auto, scale = .8]
\node(K) at (-2,0) {$\SL$};
\node(L) at (2,0) {$\SR$};
\node(M) at (0,2) {$\SA$};
\draw[->] (M) to node[yshift = 6, inner sep = 2, left] {$\sL$} (K);
\draw[->] (M) to node[yshift = 6, inner sep = 2, right] {$\sR$} (L);
\end{tikzpicture}
\hspace{.75in}
\begin{tikzpicture}[->=stealth',node distance=2.25cm, auto, scale = .8]
\node(K) at (-2,2) {$\CL$};
\node(L) at (2,2) {$\CR$};
\node(M) at (0,0) {$\CA$};
\draw[->] (K) to node[yshift = -6, inner sep = 2, left] {$\cL$}(M);
\draw[->] (L) to node[yshift = -6, inner sep = 2, right] {$\cR$}(M);
\end{tikzpicture}
\end{center} 
\noindent For any span or cospan $S$ in $\C$, refer to the objects $\SA$, $\SL$, and $\SR$ in $\C$ as the \emph{apex}, \emph{left foot}, and \emph{right foot} of $S$, respectively.  

\begin{definition}
A span $S$ in $\C$ is \emph{paired with} a cospan $C$ in $\C$ if \[\CL = \SL, \quad \CR = \SR,\quad {\rm and} \quad \cL\circ \sL=\cR\circ \sR.\] 
\end{definition}

\begin{definition}
Take $S$ and $Q$ to be spans in $\C$ with $\SL$ equal to $\QL$ and $\SR$ equal to $\QR$.  A \emph{span morphism in $\C$ from $S$ to $Q$} is a morphism $\Phi$ in $\C$ from $\SA$ to $\QA$ with \[\sL = \qL\circ\Phi \quad {\rm and} \quad \sR = \qR\circ\Phi.\]  A \emph{span isomorphism in $\C$ from $S$ to $Q$} is a span morphism that is additionally an isomorphism.
\end{definition}

View a span $S$ paired with a cospan $C$ and view a span morphism $\Phi$ from a span $S$ to a span $Q$, respectively, as these commutative diagrams:

\medskip
\begin{center} 
\begin{tikzpicture}[->=stealth',node distance=2.25cm, auto, scale = .8]
\node(K) at (-2,0) {$\CL = \SL$};
\node(L) at (2,0) {$\CR = \SR$};
\node(M) at (0,2) {$\SA$};
\draw[->] (M) to node[yshift = 6, inner sep = 2, left] {$\sL$} (K);
\draw[->] (M) to node[yshift = 6, inner sep = 2, right] {$\sR$} (L);
\node(MM) at (0,-2) {$\CA$};
\draw[->] (K) to node[yshift = -6, inner sep = 2, left] {$\cL$}(MM);
\draw[->] (L) to node[yshift = -6, inner sep = 2, right] {$\cR$}(MM);
\end{tikzpicture}
\hspace{.5in}
\begin{tikzpicture}[->=stealth',node distance=2.25cm, auto, scale = .8]
\node(L) at (-2,0) {$\SL = \QL$};
\node(R) at (2,0) {$\SR = \QR$};
\node(T) at (0,2) {$\SA$};
\node(B) at (0,-2) {$\QA$};
\draw[->] (T) to node[yshift = 6, inner sep = 2, left] {$\sL$} (L);
\draw[->] (T) to node[yshift = 6, inner sep = 2, right] {$\sR$} (R);
\draw[->] (B) to node[yshift = -6, inner sep = 2, left] {$\qL$} (L);
\draw[->] (B) to node[yshift = -6, inner sep = 2, right] {$\qR$} (R);
\draw[->] (T) to node[inner sep = 3, left] {$\Phi$} (B);
\end{tikzpicture}
\end{center} 

\begin{proposition}\label{prop:spammap}
For any span isomorphism $\Phi$, the inverse $\Phi^{-1}$ is also a span isomorphism.  Furthermore, a composite of span morphisms is a span morphism.
\end{proposition}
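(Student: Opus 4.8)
The plan is to verify both claims directly from the definition of a span morphism, relying only on associativity of composition in $\C$ together with the defining identities $\sL = \qL \circ \Phi$ and $\sR = \qR \circ \Phi$. Neither claim needs any structure beyond that of an abstract category, so the entire argument reduces to algebraic manipulation of these equations. I do not expect a genuine obstacle here; the only care required is the bookkeeping that confirms the feet of the spans align, so that the relevant composite and inverse really are span morphisms between the intended spans.

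For the first claim, suppose $\Phi$ is a span isomorphism from $S$ to $Q$, so that $\SL = \QL$ and $\SR = \QR$, the identities $\sL = \qL \circ \Phi$ and $\sR = \qR \circ \Phi$ hold, and $\Phi$ admits a two-sided inverse $\Phi^{-1}$ in $\C$. Since $S$ and $Q$ share the same left and right feet, a span morphism from $Q$ to $S$ is precisely a morphism $\Psi \colon \QA \to \SA$ satisfying $\qL = \sL \circ \Psi$ and $\qR = \sR \circ \Psi$. I would show that $\Phi^{-1}$ is such a morphism by composing the identity $\sL = \qL \circ \Phi$ on the right with $\Phi^{-1}$ and using $\Phi \circ \Phi^{-1} = \mathrm{id}_{\QA}$; this gives $\sL \circ \Phi^{-1} = \qL$, and the analogous computation yields $\sR \circ \Phi^{-1} = \qR$. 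Because $\Phi^{-1}$ is itself an isomorphism, it is a span isomorphism from $Q$ to $S$.

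For the second claim, suppose $\Phi$ is a span morphism from $S$ to $Q$ and $\Psi$ is a span morphism from $Q$ to $P$; then all three spans share the same left foot and the same right foot, and $\Psi \circ \Phi \colon \SA \to \PA$ is the candidate span morphism from $S$ to $P$. I would verify the two defining identities through the chain $\pL \circ (\Psi \circ \Phi) = (\pL \circ \Psi) \circ \Phi = \qL \circ \Phi = \sL$, using associativity and then the span morphism identities for $\Psi$ and for $\Phi$ in turn, with the symmetric computation establishing $\pR \circ (\Psi \circ \Phi) = \sR$. This exhibits $\Psi \circ \Phi$ as a span morphism from $S$ to $P$ and completes the proof.
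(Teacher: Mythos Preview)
Your proposal is correct. The paper states this proposition without proof, treating it as a routine consequence of the definitions; your direct verification---composing the span-morphism identities with $\Phi^{-1}$ on the right for the first claim, and chaining them via associativity for the second---is exactly the argument one would supply to fill the gap.
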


\subsection{Pullbacks in a Category $\C$}

This subsection introduces the notion of a pullback of a cospan and defines what it means for a category to have pullbacks.

\begin{definition}\label{def:2:uniqespanmorph}
A span $S$ in $\C$ is a \emph{pullback of a cospan $C$ in $\C$} if it is paired with $C$ and if for any other span $Q$ in $\C$ that is also paired with $C$, there exists a unique span morphism $\Phi$ in $\C$ from $Q$ to $S$.
\end{definition}

The morphism $\Phi$ in Definition~\ref{def:2:uniqespanmorph} is the unique span morphism so that this diagram commutes:

\begin{center} 
\begin{tikzpicture}[->=stealth',node distance=2.25cm, auto, scale = .9]
\node(W) at (0,0) {$\CL$};
\node(X) at (0,2) {$\SA$};
\node(Y) at (2.5,2) {$\CR$};
\node(Z) at (2.5,0) {$\CA$};
\node(R) at (-1.65,3.32) {$\QA$};
\draw[->] (X) to node[inner sep = 5, above] {$\sR$}(Y);
\draw[->] (X) to node[inner sep = 5, left] {$\sL$}(W);
\draw[->] (Y)to node[inner sep = 5, right] {$\cR$}(Z);
\draw[->] (W)to node[inner sep = 5, below] {$\cL$}(Z);
\draw[->, densely dashed] (R) to (X);
\node[] at (-.48, 2.9) {$\Phi$};
\draw[->,bend right] (R) to node[inner sep = 5, below]{\phantom{$\displaystyle\int$}$\qL$\phantom{$\displaystyle\int$}}(W);
\draw[->,bend left](R)to node[inner sep = 5, above]{$\qR$}(Y);
\end{tikzpicture}
\end{center}

\begin{definition}\label{def:hasPB}
A category $\C$ \emph{has pullbacks} if for any cospan $C$ in $\C$, there is a span $S$ in $\C$ that is a pullback of $C$ and $S$ is unique up to a span isomorphism in $\C$.
\end{definition}

The diagram consisting of a pullback $S$ of a cospan $C$ together with $C$ is a \emph{pullback square}.  We have found it useful to separately define the parts of a pullback square.

\subsection{Examples of Categories that have Pullbacks}  Denote by $\Top$ the category whose objects are topological spaces and whose morphisms are continuous functions.  The categories $\Set$ and $\Top$ are examples of categories that have pullbacks, as MacLane \cite{Mac} discusses and Awodey \cite{Awo} discusses more specifically for $\Set$.  We provide a proof here for the convenience of the reader.

Let $C$ be a cospan in $\Set$, and let $\rho_L$ and $\rho_R$ be the canonical projections \[\rho_L\colon \CL\times\CR \to \CL\quad {\rm and} \quad \rho_R\colon \CL\times\CR \to\CR.\]  Denote by $\SA$ the fibered product \[C_L\times_{\CA}\CR := \{(x,y)\in \CL\times\CR\colon (\cL\circ \rho_L)(x,y) = (\cR\circ \rho_R)(x,y)\}.\]   Take $\SL$ and $\SR$ to be equal to $\CL$ and $\CR$, respectively, and let $\sL$ and $\sR$ be the respective restrictions of $\rho_L$ and $\rho_R$ to the set $\SA$.  For any span $P$ that is paired with $C$, take $\Phi$ to be the function \[\Phi\colon \PA \to \CL\times \CR \quad{\rm by}\quad a\mapsto(\pL(a), \pR(a))\quad (\forall a\in \PA).\]  The function $\Phi$ is the unique function from $\PA$ to $\CL\times \CR$ such that \begin{equation}\label{eq:2:spanmorphinset}\pL = \rho_L\circ\Phi \quad {\rm and}\quad \pR = \rho_R\circ \Phi.\end{equation} The image of $\Phi$ is contained in $\SA$ and so $\Phi$ is a span morphism from $P$ to $S$.  Since any other span morphism from $P$ to $S$ defines a function from $P$ to $\CL\times \CR$ with the property given by \eqref{eq:2:spanmorphinset}, the function $\Phi$ is the unique span morphism from $P$ to $S$, and so the span $S$ is  a pullback of the cospan $C$.  

Suppose that $C$ is a cospan in $\Top$, and let $\rho_L$ and $\rho_R$ again be the canonical projections on $\CL\times \CR$. The product $\CL\times \CR$ with the product topology is a topological space. The fibered product $\SA$ given above is a subset of $\CL\times \CR$ and is a topological space with the subspace topology.  The projections $\sL$ and $\sR$ are continuous maps, and so $(\sL, \sR)$ is a pullback of $C$.  The proof of this fact is nearly the same as the proof in the setting of $\Set$, with only an additional straightforward check that the mappings involved are continuous.

\subsection{The Category ${\rm \bf Span}(\C)$} Suppose that $\C$ is a category that has pullbacks.  Suppose that $[S]$ and $[Q]$ are isomorphism classes of spans with respective representatives $S$ and $Q$, and $\SR$ is equal to $\QL$.  Since $\C$ has pullbacks, there is a span $P$ that is a pullback of the cospan $(\sR, \qL)$. Define by $[(\sL\circ\pL, \qR\circ\pR)]$ the composite $[S]\circ[Q]$.  Take the objects in $\C$ to be the objects in ${\rm Span}(\C)$, the isomorphism classes of spans in $\C$ to be the morphisms in ${\rm Span}(\C)$, and $\SR$ and $\SL$ to respectively be the source and target of the span $[S]$.  Given an object $X$ in $\C$ and the identity morphism ${\rm Id}$ taking $X$ to $X$, define by $[({\rm Id}, {\rm Id})]$ the identity morphism in ${\rm Span}(\C)$ with $X$ as both source and target.  It is well known that ${\rm Span}(\C)$ is a category \cite{Ben}.  Our treatment in Section~\ref{sec:spantight} of generalized span categories specializes in the case when $\C$ has pullbacks to give a proof that ${\rm Span}(\C)$ is a category.  If $\C$ does not have pullbacks, then the existence of $P$ is not guaranteed.  The next section will demonstrate that some categories that are important in classical mechanics, and more generally in differential geometry, do not have pullbacks.

\section{Some Categories that do not have Pullbacks}\label{sec:nopb}

\subsection{Background}

Refer to \cite{Lee} as a standard reference for smooth manifold theory.  The current section reviews some well-known definitions in order to explicitly establish language and notational conventions. 

An $m$-dimensional manifold is a triple $(M, \mathcal T_M, \mathcal A_M)$ such that:
\begin{enumerate}
\item[(1)] $M$ is a set; 
\item[(2)] $\mathcal T_M$ is a topology for $M$ that is Hausdorff and second countable; 
\item[(3)] $\mathcal A_M$ is an atlas, a collection of homeomorphisms such that the domain of each element of $\mathcal A_M$ is an open subset of $M$, the collection of domains of the elements of $\mathcal A_M$ form an open cover for $M$, and the range of each element of $\mathcal A_M$ is an open subset of $\mathds R^m$.  
\end{enumerate}
If $\mathcal A_M$ is maximal with respect to the property that for any $\phi$ and $\psi$ in $\mathcal A_M$ that have intersecting domains, the \emph{transition function} $\phi\circ \psi^{-1}$ and its inverse are of class $C^r$ ($r$-times continuously differentiable), then $M$ is a \emph{$C^r$\!--\,manifold}. Only the smooth case, when $r$ is infinity, is relevant to this paper.  Refer to the elements of $\mathcal A_M$ as \emph{coordinates} and refer to their domains as \emph{charts}.  It is customary to denote by $M$ a manifold $(M, \mathcal T_M, \mathcal A_M)$, and we generally follow this convention, except when it is important to explicitly distinguish between the manifold, the topological space associated to the manifold, and the underlying set.  Reference to the manifold $M$, the topological space $M$, and the underlying set $M$, will respectively be a reference to the triple $(M, \mathcal T_M, \mathcal A_M)$, the pair $(M, \mathcal T_M)$, and the set $M$.  Denote by $\Dif$ the category whose objects are smooth manifolds and whose morphisms are smooth maps between smooth manifolds.

\subsection{Some Functors that preserve Pullbacks}
 
 Suppose that $\C$ is a locally small category and that $X$ is an object in $\C$.  Denote by $\Hom(X,-)$ the \emph{hom functor} from $\C$ to $\Set$.  
 A functor $\F$ that takes $\C$ to $\Set$ is \emph{representable} if there is an object $B$ in $\C$ so that $\F$ is naturally isomorphic to $\Hom(B,-)$.
 
 The categories $\Dif$, $\Top$, and $\Set$ are locally small and there are forgetful functors, each to be ambiguously denoted by $\F$, from $\Dif$ to $\Top$ and from $\Top$ to $\Set$ given by \[\F(M, \mathcal T_M, \mathcal A_M) = (M, \mathcal T_M) \quad {\rm and}\quad \F(M, \mathcal T_M) = M.\] The morphisms in $\Dif$ and $\Top$ are entirely determined by their action on the underlying sets and so the forgetful functor in each case maps a given source category to a subcategory of the target category.  The functor obtained by composing  the above forgetful functors is the forgetful functor, denoted again by $\F$, from $\Dif$ to $\Set$.

 A functor $\F$ from a category $\C$ to a category $\C^\prime$ \emph{preserves pullbacks} if for any cospan $C$ in $\C$, if $S$ is a pullback of $C$, then $\F(S)$ is a pullback of $\F(C)$. The following lemma is a special case of a more general result that guarantees that representable functors preserve pullbacks \cite[p.~64]{Bor}.  The proof of Lemma~\ref{3:lem:reppreslim} is included for the convenience of the reader, because the language in our definition of a pullback is slightly different from Borceux's \cite{Bor}.

\begin{lemma}\label{3:lem:reppreslim}
For any locally small category  $\C$ and any object $B$ in $\C$, the functor $\Hom(B,-)$ preserves pullbacks, where \[\Hom(B,-)\colon \C\to\Set.\]
\end{lemma}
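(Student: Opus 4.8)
The plan is to verify directly that $F(S)$ satisfies the defining universal property of a pullback of $F(C)$ in $\Set$, deriving it from the universal property of $S$ as a pullback of $C$ in $\C$ applied one element at a time. To ease notation I write $F\colon\C\to\Set$ for the functor $\Hom(B,-)$, so that $F(Y)=\Hom(B,Y)$ for each object $Y$, and for a morphism $f\colon Y\to Z$ the function $F(f)$ is post-composition, $F(f)(g)=f\circ g$ for every $g\colon B\to Y$.

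First I would check that $F(S)$ is paired with $F(C)$. Because $S$ is a pullback of $C$ it is in particular paired with $C$, so $\SL=\CL$, $\SR=\CR$, and $\cL\circ\sL=\cR\circ\sR$. Applying $F$ and using functoriality gives $F(\SL)=F(\CL)$, $F(\SR)=F(\CR)$, and $F(\cL)\circ F(\sL)=F(\cR)\circ F(\sR)$, which is exactly the pairing condition for $F(S)$ and $F(C)$ in $\Set$. This step is routine.

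The substance of the argument is the universal property, and here is the step I expect to be the conceptual heart. Let $Q$ be an arbitrary span in $\Set$ paired with $F(C)$, so that $\QL=\Hom(B,\CL)$, $\QR=\Hom(B,\CR)$, and for each $a\in\QA$ the morphisms $\qL(a)\colon B\to\CL$ and $\qR(a)\colon B\to\CR$ satisfy $\cL\circ\qL(a)=\cR\circ\qR(a)$. I would observe that this last identity says precisely that, for each fixed $a$, the pair $(\qL(a),\qR(a))$ is a span in $\C$ with apex $B$ that is paired with the cospan $C$. Since $S$ is a pullback of $C$ in $\C$, its universal property yields a unique span morphism $\phi_a\colon B\to\SA$ in $\C$ with $\qL(a)=\sL\circ\phi_a$ and $\qR(a)=\sR\circ\phi_a$. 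Defining $\Phi\colon\QA\to\Hom(B,\SA)=F(\SA)$ by $\Phi(a)=\phi_a$ and unwinding the action of $F$ on morphisms then gives $F(\sL)\circ\Phi=\qL$ and $F(\sR)\circ\Phi=\qR$, so $\Phi$ is a span morphism from $Q$ to $F(S)$.

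For uniqueness I would take any span morphism $\Phi'$ from $Q$ to $F(S)$ and note that for each $a\in\QA$ the morphism $\Phi'(a)\colon B\to\SA$ satisfies $\sL\circ\Phi'(a)=\qL(a)$ and $\sR\circ\Phi'(a)=\qR(a)$; hence $\Phi'(a)$ is a span morphism from $(\qL(a),\qR(a))$ to $S$ in $\C$, and the uniqueness clause in the pullback property of $S$ forces $\Phi'(a)=\phi_a=\Phi(a)$. Thus $\Phi'=\Phi$. The one point demanding care is that an arbitrary test span $Q$ in $\Set$ need not be of the form $F(P)$ for a span $P$ in $\C$, so the universal property of $S$ cannot be applied to $Q$ in a single stroke; local smallness and representability are exactly what allow $Q$ to be decomposed elementwise into a family of spans with apex $B$ in $\C$, to each of which the universal property of $S$ applies.
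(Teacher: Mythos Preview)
Your proof is correct and follows essentially the same approach as the paper: both apply the universal property of $S$ in $\C$ elementwise, using that each element of a test span in $\Set$ over $\Hom(B,\CL)$ and $\Hom(B,\CR)$ is itself a span in $\C$ with apex $B$. The only cosmetic difference is that the paper tests against the specific fibered-product span in $\Set$ and then invokes uniqueness of pullbacks, whereas you verify the universal property directly against an arbitrary test span; your version is, if anything, slightly cleaner.
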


\begin{proof}
Take $X$ and $Y$ to be objects in $\C$.   For any morphism $f$ in $\C$ from $X$ to $Y$, denote by $\tilde{f}$ the morphism $\Hom(B,f)$ that is defined to act on any $\beta$ in $\Hom(B,X)$ by \[\tilde{f}(\beta) = f\circ \beta.\]  Suppose that $C$ is a cospan in $\C$ and that $S$ is a pullback of $C$.   Since $\C$ is locally small, the functor $\Hom(B,-)$ maps the cospan $C$ to a cospan $\Hom(B,C)$ in $\Set$, taking the pair $(\cL, \cR)$ to the pair $(\tcL, \tcR)$.  It similarly maps the span $S$ to the span $\Hom(B, S)$. Since $S$ is a pullback of $C$, it is paired with $C$ and so \[\cL\circ\sL = \cR\circ \sR.\] Therefore, for any $\psi$ in $\Hom(B,\SA)$, \[(\tcL\circ\tsL)(\psi) = \cL\circ\sL\circ \psi = \cR\circ \sR\circ \psi = (\tcR\circ\tsR)(\psi),\] and so the span $\Hom(B, S)$ is paired with the cospan $\Hom(B, C)$.

Denote respectively by $\rho_L$ and $\rho_R$ the canonical projections from $\Hom(B,\CL)\times\Hom(B,\CR)$ to $\Hom(B,\CL)$ and $\Hom(B,\CR)$, and by $\QA$ the set \begin{align*}&\Hom(B,\CL)\times_{\Hom(B,\CA)}\Hom(B,\CR) \\&\qquad\qquad = \left\{\alpha\in \Hom(B,\CL)\times\Hom(B,\CR)\colon (\tcL\circ \rho_L)(\alpha) = (\tcR\circ \rho_R)(\alpha)\right\}.\end{align*}  Take $\qL$ and $\qR$ to be the respective restrictions of $\rho_L$ and $\rho_R$ to  $\QA$.  Denote by $Q$ the span $(\qL, \qR)$ in $\Set$, a pullback of the cospan $\Hom(B,C)$.

For any $\alpha$ in $\QA$, there are morphisms $\alpha_L$ and $\alpha_R$ in $\C$ that respectively map $B$ to $\CL$ and $\CR$, and $\alpha$ is equal to $(\alpha_L, \alpha_R)$.  Furthermore, \[\cL\circ\alpha_L = \tcL(\alpha_L) = (\tcL\circ \qL)(\alpha) = (\tcR\circ \qR)(\alpha)= \tcR(\alpha_R) = \cR\circ\alpha_R.\] The pair $(\alpha_L, \alpha_R)$ is, therefore, a span in $\C$ that is paired with $C$ and, since $S$ is a pullback of $C$, there is a unique span morphism $\phi_\alpha$ in $\C$ from $(\alpha_L, \alpha_R)$ to $S$ that maps $B$ to $\SA$.  Take $\Phi$ to be the function from $Q$ to $\Hom(B, S)$ that is defined for each $\alpha$ in $\QA$ by \[\Phi(\alpha) = \phi_\alpha.\] The morphism $\phi_\alpha$ is a span morphism, implying that \[\sL\circ\phi_\alpha =  \alpha_L \quad {\rm and}\quad \sR\circ\phi_\alpha =  \alpha_R.\] These equalities further imply that \[(\tsL\circ\Phi)(\alpha) = \sL\circ\phi_\alpha, \quad  \alpha_L  = \qL(\alpha), \quad (\tsR\circ\Phi)(\alpha)= \sR\circ\phi_\alpha, \quad{\rm and} \quad \alpha_R = \qR(\alpha),\] and so \[(\tsL\circ\Phi)(\alpha) = \qL(\alpha) \quad {\rm and}\quad (\tsR\circ\Phi)(\alpha) = \qR(\alpha).\]  The morphism $\Phi$ in $\Set$ is, therefore, a span morphism and is unique since $\phi_\alpha$ is uniquely determined.  Since $Q$ is a pullback of $\Hom(B,C)$, the span $\Hom(B,S)$ is as well, and so $\Hom(B,-)$ maps pullbacks in $\C$ to pullbacks in $\Set$.
\end{proof}

Take $(\bf1, \mathcal T_{\bf1}, \mathcal A_{\bf1})$ to be the singleton manifold with its unique discrete topology.  Denote by $\Dif((\bf1, \mathcal T_{\bf1}, \mathcal A_{\bf1}),-)$ the functor that maps any given manifold $(X,\mathcal T_X, \mathcal A_X)$ to the set of functions $\Dif(({\bf1}, \mathcal T_{\bf1}, \mathcal A_{\bf1}),(X,\mathcal T_X, \mathcal A_X))$.  Compress notation by writing $\Dif({\bf1},X)$ instead of $\Dif(({\bf1}, \mathcal T_{\bf1}, \mathcal A_{\bf1}),(X,\mathcal T_X, \mathcal A_X))$ and $\Dif({\bf1},-)$ instead of $\Dif((\bf1, \mathcal T_{\bf1}, \mathcal A_{\bf1}),-)$.   Lemma~\ref{3:lem:nat} is well known, but we include it for the reader's convenience.  

\begin{lemma}\label{3:lem:nat}
The forgetful functor $\F$ from $\Dif$ to $\Set$ is naturally isomorphic to the functor $\Dif({\bf1},-)$.
\end{lemma}

\begin{proof}

For any object $(X,\mathcal T_X, \mathcal A_X)$ in $\Dif$, take $\eta_X$ to be the function from $\Dif({\bf1},X)$ to $X$ that is given for any $h$ in $\Dif({\bf1},X)$ by 
\[\eta_X(h) = h({\bf1}).\]  The function $\eta_X$ is invertible, with inverse $\eta_X^{-1}$ that is given for any $x$ in $X$ by \[\eta_X^{-1}(x) = h_x\colon{\bf1}\mapsto x,\] hence $\eta_X$ is an isomorphism.

Suppose that $g$ is in $\Dif({\bf1},X)$ and $f$ is a smooth function from $(X,\mathcal T_X, \mathcal A_X)$ to $(Y,\mathcal T_Y, \mathcal A_Y)$.  The point $\eta_{X}(g)$ in $X$ is equal to $g({\bf1})$ and $\F(f)(g({\bf1}))$ is equal to $f(g({\bf1}))$.  Furthermore, $\Dif({\bf1},f)$ takes $g$ to $f\circ g$ and $\eta_{Y}$ takes $(f\circ g)$ to $(f\circ g)({\bf1})$, which is equal to $f(g({\bf1}))$, and so this diagram is commutative:

\medskip

\begin{center}
\begin{tikzpicture}[node distance=2.25cm, auto]
\node(W) at (0,0) {$\Dif({\bf1},Y)$};
\node(X) at (0,2) {$\Dif({\bf1},X)$};
\node(Y) at (4,2) {$\F(X,\mathcal T_X, \mathcal A_X)$};
\node(Z) at (4,0) {$\F(Y,\mathcal T_Y, \mathcal A_Y)$};

\draw[->] (X) to node[above]{$\eta_{X}$}(Y);
\draw[->] (X) to node[left]{$\Dif({\bf1},f)$}(W);
\draw[->] (Y)to node[right] {$\F(f)$}(Z);
\draw[->] (W)to node[below] {$\eta_{Y}$}(Z);
\end{tikzpicture}
\end{center}

\noindent   The functor $\Dif({\bf1},-)$ is, therefore, naturally isomorphic to $\F$.
\end{proof}

Lemma~\ref{3:lem:reppreslim} and Lemma~\ref{3:lem:nat} together imply Propostion~\ref{3:prop:repfun}.

\begin{proposition}\label{3:prop:repfun}
The forgetful functor $\F$ from $\Dif$ to $\Set$ preserves pullbacks.
\end{proposition}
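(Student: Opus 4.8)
The plan is to assemble the proposition from the two ingredients named just before the statement: Lemma~\ref{3:lem:reppreslim}, which guarantees that $\Hom({\bf1},-)$ preserves pullbacks, and a natural isomorphism between $\Hom({\bf1},-)$ and the forgetful functor $\F$. The whole argument then rests on the general principle that the property of preserving pullbacks is invariant under natural isomorphism of functors. Accordingly, the work divides into two pieces: first, exhibiting the natural isomorphism explicitly; second, showing that preservation of pullbacks transfers across it.

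For the natural isomorphism, let $\ast$ denote the single point of ${\bf1}$ and define, for each smooth manifold $M$, the evaluation map $\eta_M\colon \Hom({\bf1}, M)\to \F(M)$ by $\eta_M(\gamma) = \gamma(\ast)$. Since ${\bf1}$ is zero-dimensional, every set-theoretic map out of ${\bf1}$ is automatically smooth, so $\eta_M$ is a bijection whose inverse sends a point of $M$ to the constant map of ${\bf1}$ onto that point. Naturality in $M$ amounts to the identity $\F(f)\circ \eta_M = \eta_N \circ \Hom({\bf1}, f)$ for each smooth $f\colon M\to N$, which holds because both sides send $\gamma$ to $f(\gamma(\ast))$. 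Thus $\eta$ is a natural isomorphism from $\Hom({\bf1},-)$ to $\F$.

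For the transfer, I would establish the following statement in the span and cospan language of Section~\ref{sec:physsys}: if $\mathcal G$ and $\F$ are naturally isomorphic functors and $\mathcal G$ preserves pullbacks, then so does $\F$. Let $C$ be a cospan in $\Dif$ with pullback $S$. Applying the functor $\F$ to the pairing equations shows at once that $\F(S)$ is paired with $\F(C)$, and by Lemma~\ref{3:lem:reppreslim} the span $\Hom({\bf1}, S)$ is a pullback of $\Hom({\bf1}, C)$. The components of $\eta$ at the feet and apex assemble into a cospan isomorphism from $\Hom({\bf1}, C)$ to $\F(C)$ and a compatible span isomorphism from $\Hom({\bf1}, S)$ to $\F(S)$. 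To verify the universal property of $\F(S)$, take an arbitrary span $P$ in $\Set$ paired with $\F(C)$, transport it along the inverse foot-components $\eta^{-1}$ to a span $P'$ paired with $\Hom({\bf1}, C)$, apply the universal property of $\Hom({\bf1}, S)$ to obtain the unique span morphism from $P'$ to $\Hom({\bf1}, S)$, and then compose with the relevant components of $\eta$ to produce a span morphism from $P$ to $\F(S)$; Proposition~\ref{prop:spammap} ensures these composites are again span morphisms. Uniqueness of the resulting morphism follows from uniqueness for $\Hom({\bf1}, S)$ together with the invertibility of the components of $\eta$.

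The main obstacle is this transfer step. The universal property of a pullback quantifies over all test spans $P$ in $\Set$, not only those lying in the image of $\Hom({\bf1},-)$, so one cannot simply pull $P$ back through the functor and invoke the lemma directly. The crux is therefore the careful bookkeeping that transports an arbitrary test span across the component bijections of $\eta$ and checks that both the span-morphism equations and the uniqueness clause survive this transport. Everything else—the bijectivity and naturality of $\eta$, and the pairing of $\F(S)$ with $\F(C)$—is routine.
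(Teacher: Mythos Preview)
Your proposal is correct and follows exactly the route the paper takes: the paper states just before the proposition that Lemma~\ref{3:lem:reppreslim} together with the natural isomorphism between $\F$ and $\Hom({\bf1},-)$ imply the result, and you have simply spelled out those two ingredients and the transfer argument in detail. The paper leaves the natural-isomorphism and transfer steps implicit, so your write-up is a faithful expansion rather than a different approach.
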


\subsection{$\SurjSub$ does not have Pullbacks}

Denote by $\SurjSub$ the category whose objects are smooth manifolds and whose morphisms are smooth surjective submersions.  This category is important in the study of classical mechanical systems because a map that takes the configuration space of a classical mechanical system to the configuration space of a subsystem should be a surjective submersion.  The category $\SurjSub$ is an example of a category that does not have pullbacks.  To see this, denote the one and two point manifolds respectively by $\mathbf{1}$ and $\mathbf{2}$.  Take $f$ to be the unique map from $\mathbf{2}$ to $\mathbf{1}$.  Denote by ${\rm Id}$ the identity map from $\mathbf{2}$ to $\mathbf{2}$.  The span $({\rm Id}, {\rm Id})$ is paired with the cospan $(f,f)$.  

Suppose that $\pi_L$ and $\pi_R$ are the canonical projections from $\mathbf{2\times_1 2}$ to $\mathbf 2$.  Suppose that $S$ is a pullback of $(f,f)$ in $\SurjSub$. Proposition~\ref{3:prop:repfun} implies that the image of $S$ under the forgetful functor from $\Dif$ to $\Set$ is the span $(\pi_L, \pi_R)$ and that there is a unique $\Phi$ in $\SurjSub$ so that this diagram commutes:

\medskip

\begin{center} 
\begin{tikzpicture}[->=stealth',node distance=2.25cm, auto, scale = .9]
\node(W) at (0,0) {$\mathbf{2}$};
\node(X) at (0,2) {$\mathbf{2}\times_{\mathbf{1}}\mathbf{2}$};
\node(Y) at (2.5,2) {$\mathbf{2}$};
\node(Z) at (2.5,0) {$\mathbf{1}$};
\node(R) at (-1.65,3.32) {$\mathbf{2}$};
\draw[->] (X) to node[inner sep = 5, above] {$\pi_R$}(Y);
\draw[->] (X) to node[inner sep = 5, left] {$\pi_L$}(W);
\draw[->] (Y)to node[inner sep = 5, right] {$f$}(Z);
\draw[->] (W)to node[inner sep = 5, below] {$f$}(Z);
\draw[->, densely dashed] (R) to (X);
\node[] at (-.48, 2.9) {$\Phi$};
\draw[->,bend right] (R) to node[inner sep = 5, below]{\phantom{$\displaystyle\int$}${\rm Id}$\phantom{$\displaystyle\int$}}(W);
\draw[->,bend left](R)to node[inner sep = 5, above]{${\rm Id}$}(Y);
\end{tikzpicture}
\end{center}

However, $\mathbf{2}\times_{\mathbf{1}}\mathbf{2}$ is isomorphic to $\mathbf{2}\times\mathbf{2}$, a set with four elements, and so $\Phi$ is a surjection from a set with only two elements to a set with four elements, which is not possible.  The cospan $(f,f)$ does not have a pullback in $\SurjSub$, and so $\SurjSub$ does not have pullbacks.

The technical obstructions that arise in working with categories of smooth manifolds with surjective morphisms are elementary, as the above example indicates. To avoid any misunderstanding, notice that examples of cospans that do not have pullbacks may be constructed in a similar way that involve connected manifolds with nontrivial smooth structures.  For example, take $f$ to be a projection from $\mathds R^4$ to $\mathds R^2$.  Take $\pi_L$ and $\pi_R$ to be the restrictions to the subset $\mathds R^4 \times_{\mathds R^2} \mathds R^4$ of the respective projections $\rho_L$ and $\rho_R$ that are defined for each $x$ and $y$ in $\mathds R^4$ by \[\rho_L(x,y) = x \quad {\rm and}\quad \rho_R(x,y) = y.\]  Take ${\rm Id}$ to be the identity map from $\mathds R^4$ to $\mathds R^4$.  The span $({\rm Id}, {\rm Id})$ is paired with $(f,f)$, but there is no smooth surjection from $\mathds R^4$ to the fibered product since the fibered product is six-dimensional.  This counterexample is relevant to demonstrating that the category of Riemannian manifolds with surjective Riemannian submersions and the category of symplectic manifolds with surjective Poisson maps also do not have pullbacks.  These categories play a fundamental role in \cite{BWY}.

\subsection{$\Dif$ does not have Pullbacks}

Suppose throughout this subsection that $f$ and $g$ are morphisms in $\Dif$ that have mutual target $(Z,\mathcal T_Z, \mathcal A_Z)$ and respective sources $(X,\mathcal T_X, \mathcal A_X)$ and $(Y,\mathcal T_Y, \mathcal A_Y)$.  Recall that $\pi_X$ and $\pi_Y$ are the respective projections from the set $X\times_ZY$ to $X$ and $Y$.  Take ${\mathcal T}_{S}$ to be the subspace topology on $X\times_ZY$ that $X\times_ZY$ inherits from the product topology on $X\times Y$ and with respect to which $\pi_X$ and $\pi_Y$ are both continuous.  View the functions $f$ and $g$ as functions in $\Top$ that have the topological space $(Z, \mathcal T_Z)$ as their mutual target and the topological spaces $(X,\mathcal T_X)$ and $(Y,\mathcal T_Y)$ as their respective sources. Suppose that $(W, \mathcal T_W, \mathcal A_W)$ is an embedded submanifold of $(Z, \mathcal T_Z, \mathcal A_Z)$.  Refer to \cite[p. 143-144]{Lee} for further discussion of transversality and, in particular, for the proof of Proposition~\ref{prop:trans}.

\begin{definition}\label{Def:Trans}
The smooth function $f$ is \emph{transverse to $W$} if, for every $x$ in $f^{-1}(W)$, the spaces $T_{f(x)}W$ and $\d f\!\(T_{x}X\)$ together span $T_{f(x)}Z$.  The smooth functions $f$ and $g$ are \emph{transverse} if, for every point $x$ in $X$ and $y$ in $Y$ with $f\!\(x\)$ and $g\!\(y\)$ both equal to $z$, \[\d f\!\(T_xX\) + \d g\!\(T_y Y\) = T_z Z.\]
\end{definition} 

\begin{proposition}\label{prop:trans}
Suppose that $X$ and $Z$ are smooth manifolds and $W$ is an embedded submanifold of $Z$. If $f$ is a smooth map from $X$ to $Z$ that is transverse to $W$, then $f^{-1}(W)$ is an embedded submanifold of $X$ whose codimension is equal to the codimension of $W$ in $Z$.
\end{proposition}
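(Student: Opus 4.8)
The plan is to reduce the statement to the regular level set theorem by representing $W$ locally as the zero set of a submersion. Recall that an embedded submanifold $W$ of codimension $k$ in $Z$ admits, about any of its points $z$, a slice chart, and hence an open set $U\subseteq Z$ containing $z$ together with a submersion $\Phi\colon U\to\mathds R^k$ such that $W\cap U=\Phi^{-1}(0)$ and such that $\ker \d\Phi_w = T_w W$ for every $w\in W\cap U$. (Concretely, $\Phi$ consists of the last $k$ coordinate functions of a slice chart.) This local defining submersion is the device through which transversality of $f$ will be converted into a regularity statement.

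First I would fix a point $x\in f^{-1}(W)$, set $z=f(x)\in W$, and choose such a defining submersion $\Phi\colon U\to\mathds R^k$ for $W$ near $z$. Consider the smooth map $\Phi\circ f$ on the open set $f^{-1}(U)\subseteq X$, and observe that
\[
f^{-1}(W)\cap f^{-1}(U) = (\Phi\circ f)^{-1}(0).
\]
The crux is to check that $0$ is a regular value of $\Phi\circ f$ on $f^{-1}(U)$, that is, that $\d(\Phi\circ f)_{x'} = \d\Phi_{f(x')}\circ \d f_{x'}$ is surjective at every $x'\in(\Phi\circ f)^{-1}(0)$. This is where the transversality hypothesis enters, and the argument is pure linear algebra: writing $w=f(x')$, Definition~\ref{Def:Trans} gives $\d f_{x'}(T_{x'}X)+T_w W = T_w Z$, and applying the surjection $\d\Phi_w$ together with $\d\Phi_w(T_w W)=\d\Phi_w(\ker \d\Phi_w)=0$ yields $\d\Phi_w\!\(\d f_{x'}(T_{x'}X)\)=\mathds R^k$, which is exactly the surjectivity of $\d(\Phi\circ f)_{x'}$.

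With regularity in hand, the regular level set theorem endows $(\Phi\circ f)^{-1}(0)$ with the structure of an embedded submanifold of $f^{-1}(U)$ of codimension $k=\operatorname{codim}_Z W$. Since $x$ was arbitrary, these local pieces cover $f^{-1}(W)$; because the embedded-submanifold property is local and the codimension $k$ is the same on every patch, they assemble into a single embedded submanifold structure on $f^{-1}(W)$ of codimension equal to $\operatorname{codim}_Z W$. I expect the only genuinely delicate point to be the bookkeeping that makes the local constructions globally consistent — checking that the subspace topology and the smooth structures agree on overlaps — whereas the transversality-to-regularity step is short once the local defining submersion is available.
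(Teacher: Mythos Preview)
Your argument is correct and is the standard proof of this classical fact. Note that the paper does not actually prove Proposition~\ref{prop:trans}; it simply cites Lee's textbook for the proof, and the argument you have written is essentially the one found there.
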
 

\begin{proposition}\label{prop:TransverseansFiberedProductA}
If $f$ and $g$ are transverse, then the fibered product $X\times_{Z}Y$ is a smooth embedded submanifold of codimension equal to the dimension of $Z$. Furthermore, the span $(\pi_X, \pi_Y)$ in $\Dif$ is a pullback of $(f,g)$.
\end{proposition}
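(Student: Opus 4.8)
The plan is to recognize the fibered product as the preimage of a diagonal and then invoke the transversality preimage theorem, Proposition~\ref{prop:trans}. Let $f\times g\colon X\times Y\to Z\times Z$ denote the product map $(x,y)\mapsto(f(x),g(y))$ and let $\Delta_Z=\{(z,z)\colon z\in Z\}$ be the diagonal. The diagonal is an embedded submanifold of $Z\times Z$ diffeomorphic to $Z$, so it has codimension $\dim Z$, and at the level of underlying sets one has $X\times_ZY=(f\times g)^{-1}(\Delta_Z)$. The first and main step is to check that $f\times g$ is transverse to $\Delta_Z$. Fix $(x,y)$ with $f(x)=g(y)=z$ and identify $T_{(z,z)}(Z\times Z)$ with $T_zZ\oplus T_zZ$, so that $T_{(z,z)}\Delta_Z=\{(v,v)\colon v\in T_zZ\}$ while $\d(f\times g)\!\(T_{(x,y)}(X\times Y)\)=\d f\!\(T_xX\)\oplus\d g\!\(T_yY\)$. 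Given an arbitrary $(a,b)\in T_zZ\oplus T_zZ$, the hypothesis $\d f\!\(T_xX\)+\d g\!\(T_yY\)=T_zZ$ of Definition~\ref{Def:Trans} lets me write $a-b=p-q$ with $p\in\d f\!\(T_xX\)$ and $q\in\d g\!\(T_yY\)$; setting $v=a-p=b-q$ exhibits $(a,b)=(v,v)+(p,q)$, which shows $T_{(z,z)}\Delta_Z+\d(f\times g)\!\(T_{(x,y)}(X\times Y)\)=T_{(z,z)}(Z\times Z)$. Hence $f\times g$ is transverse to $\Delta_Z$.

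Applying Proposition~\ref{prop:trans} to $f\times g$ and $W=\Delta_Z$ then shows that $(f\times g)^{-1}(\Delta_Z)=X\times_ZY$ is an embedded submanifold of $X\times Y$ whose codimension equals the codimension $\dim Z$ of $\Delta_Z$ in $Z\times Z$. This proves the first assertion. The canonical projections $\pi_X$ and $\pi_Y$ are restrictions to the embedded submanifold $X\times_ZY$ of the smooth projections $X\times Y\to X$ and $X\times Y\to Y$, hence are smooth, so $(\pi_X,\pi_Y)$ is a span in $\Dif$, and it is paired with $(f,g)$ since $f\circ\pi_X=g\circ\pi_Y$ holds by the definition of the fibered product.

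It remains to verify the universal property. Let $Q=(\qL,\qR)$ be any span in $\Dif$ paired with the cospan $(f,g)$, so that $f\circ\qL=g\circ\qR$. The map $\Phi\colon\QA\to X\times Y$ given by $\Phi(a)=(\qL(a),\qR(a))$ is smooth because its two components are, and the pairing condition forces $\Phi(\QA)\subseteq X\times_ZY$. Because $X\times_ZY$ is an embedded submanifold, the restriction-of-codomain property for embedded submanifolds \cite{Lee} guarantees that $\Phi$ is smooth as a map into $X\times_ZY$. By construction $\pi_X\circ\Phi=\qL$ and $\pi_Y\circ\Phi=\qR$, so $\Phi$ is a span morphism in $\Dif$ from $Q$ to $(\pi_X,\pi_Y)$; and it is the unique one, since these two equations already determine $\Phi$ pointwise. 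Thus $(\pi_X,\pi_Y)$ satisfies the defining property of a pullback of $(f,g)$.

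I expect the main obstacle to be the smoothness of the universal map $\Phi$: the set-theoretic and topological universal properties are inherited from the corresponding pullbacks in $\Set$ and $\Top$, but promoting $\Phi$ to a smooth map requires precisely that $X\times_ZY$ be \emph{embedded} rather than merely immersed, which is exactly what the first part supplies via Proposition~\ref{prop:trans}. The reformulation of transversality of $f$ and $g$ as transversality of $f\times g$ to the diagonal is the conceptual crux that makes Proposition~\ref{prop:trans} applicable.
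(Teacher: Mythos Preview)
Your proof is correct and follows essentially the same route as the paper: recognize $X\times_Z Y$ as $(f\times g)^{-1}(\Delta_Z)$, invoke Proposition~\ref{prop:trans}, and then verify the universal property via $\Phi(a)=(\qL(a),\qR(a))$. You actually supply two details the paper leaves implicit---the explicit linear-algebra check that transversality of $f$ and $g$ yields transversality of $f\times g$ to $\Delta_Z$, and the observation that smoothness of $\Phi$ into the fibered product requires the \emph{embedded} (not merely immersed) submanifold property---so if anything your argument is more complete.
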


\begin{proof}
\begin{pf}
Denote by $\Delta_Z$ the diagonal $\{\(z,z\)\colon z\in Z\}$ of $Z\times Z$, an embedded submanifold of $Z\times Z$. The function $f\times g$, with \[f\times g\colon X\times Y\to Z\times Z \quad \text{by} \quad (x,y)\mapsto (f(x),g(y)),\] is smooth and $\(f\times g\)^{-1}\!\(\Delta_{Z}\)$ is equal to $X\times_{Z} Y$. Since $f$ and $g$ are transverse, the function $f\times g$ is transverse to $\Delta_{Z}$.  Proposition~\ref{prop:trans} implies that $X\times_{Z} Y$ is a smooth manifold with codimension in $X\times Y$ equal to the dimension of $\Delta_{Z}$.  The dimension of $\Delta_{Z}$ is equal to that of $Z$, and so the codimension of $X\times_{Z} Y$ in $X\times Y$ is equal to the dimension of $Z$.

To show that $(\pi_X, \pi_Y)$ is a pullback of $(f,g)$, suppose that $S$ is a span in $\Dif$ that is paired with $(f,g)$.  Define for each $s$ in $\SA$ the span morphism $\Phi$ from $S$ to $(\pi_X, \pi_Y)$ by \[\Phi(s) = (\sL(s), \sR(s)).\]  Take $\Phi^\prime$ to be another span morphism from $S$ to $(\pi_X, \pi_Y)$.  For any $s$ in $\SA$, \[\pi_X(\Phi^\prime(s)) = \sL(s)\quad {\rm and}\quad \pi_Y(\Phi^\prime(s)) = \sR(s),\] and so $\Phi^\prime(s)$ is equal to $\Phi(s)$.  Therefore, the morphism $\Phi^\prime$ is equal to $\Phi$.  Since $\Phi$ is unique, $(\pi_X, \pi_Y)$ is a pullback.  
\end{pf}
\end{proof}
 
If $f$ and $g$ are in $\SurjSub$ with mutual target $Z$, then they are transverse and so Proposition~\ref{prop:TransverseansFiberedProductA} implies the following:

 \begin{proposition}\label{prop:TransverseansFiberedProductDim}
If $\(f, g\)$ is a cospan in $\SurjSub$, then the fibered product $X\times_{Z}Y$ is a smooth embedded submanifold of $X\times Y$ of dimension ${\rm dim}\!\(X\times_ZY\)$, where \[{\rm dim}\!\(X\times_ZY\) = {\rm dim}\!\(X\) +{\rm dim}\!\(Y\) - {\rm dim}\!\(Z\).\] 
\end{proposition}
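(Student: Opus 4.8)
The plan is to deduce this proposition directly from Proposition~\ref{prop:TransverseansFiberedProductA} by verifying that any cospan in $\SurjSub$ consists of transverse maps, and then translating the codimension statement of that proposition into the stated dimension formula. The only substantive point is the reduction to transversality, and that is immediate from the definition of a submersion.

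First I would recall that, by the definition of $\SurjSub$, the morphisms $f$ and $g$ are surjective submersions between smooth manifolds. The submersion condition means precisely that $\d f$ is surjective at every point of $X$, so $\d f(T_x X) = T_{f(x)} Z$ for every $x$ in $X$, and likewise $\d g(T_y Y) = T_{g(y)} Z$ for every $y$ in $Y$. To check the transversality condition in Definition~\ref{Def:Trans}, fix any $x$ in $X$ and $y$ in $Y$ with $f(x)$ and $g(y)$ both equal to $z$. Since $\d f(T_x X)$ already equals all of $T_z Z$, the sum $\d f(T_x X) + \d g(T_y Y)$ equals $T_z Z$ as well. Hence $f$ and $g$ are transverse, independent of any further hypotheses on $g$.

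With transversality in hand, Proposition~\ref{prop:TransverseansFiberedProductA} applies and shows that the fibered product $X\times_Z Y$ is a smooth embedded submanifold of $X\times Y$ whose codimension equals ${\rm dim}(Z)$; surjectivity of $f$ and $g$ moreover guarantees that this submanifold is nonempty. The final step is dimension bookkeeping: since ${\rm dim}(X\times Y) = {\rm dim}(X) + {\rm dim}(Y)$ and the codimension of $X\times_Z Y$ in $X\times Y$ is ${\rm dim}(Z)$, subtracting yields ${\rm dim}(X\times_Z Y) = {\rm dim}(X) + {\rm dim}(Y) - {\rm dim}(Z)$, as claimed.

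I expect no genuine obstacle, as the argument is a short specialization of Proposition~\ref{prop:TransverseansFiberedProductA}. The only place warranting care is the transversality reduction: one must invoke that a submersion has fiberwise surjective differential, and then observe that a single summand already filling $T_z Z$ forces the entire sum to fill $T_z Z$, so the transversality equation of Definition~\ref{Def:Trans} holds for every matching pair $(x,y)$. Everything after that reduction is a mechanical consequence of the codimension conclusion.
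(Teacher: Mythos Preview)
Your proposal is correct and follows exactly the paper's approach: the paper simply remarks that maps in $\SurjSub$ are transverse and invokes Proposition~\ref{prop:TransverseansFiberedProductA}, which is precisely what you do, only with the transversality verification and the codimension-to-dimension arithmetic spelled out in more detail.
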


For the following proposition, take $(f,g)$ to be a cospan in $\Dif$ but where the maps $f$ and $g$ are not assumed to be transverse: 

\begin{proposition}\label{Sandxzy}
If $S$ is a span in $\Dif$ that is a pullback of $(f, g)$, and if $(\pi_X, \pi_Y)$ and $(\sL, \sR)$ are span isomorphic as spans in $\Top$, then $X\times_Z Y$ has a manifold structure.
\end{proposition}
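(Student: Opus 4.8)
The plan is to transport the smooth structure of the apex $\SA$ of the pullback $S$ onto the set $X\times_Z Y$ by means of the homeomorphism that the hypothesized span isomorphism supplies. The conclusion asks only for a manifold structure on $X\times_Z Y$, so the essential input is a homeomorphism between $X\times_Z Y$ and a smooth manifold, and the span isomorphism in $\Top$ provides exactly this on the level of apexes.

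First I would extract the relevant homeomorphism. Since $S$ is paired with $(f,g)$, its feet satisfy $\SL = X$ and $\SR = Y$, matching the feet of $(\pi_X,\pi_Y)$. By hypothesis the spans $(\pi_X,\pi_Y)$ and $(\sL,\sR)$ are span isomorphic in $\Top$, so there is a span isomorphism which, together with Proposition~\ref{prop:spammap}, we may take to be a homeomorphism $\Psi\colon (X\times_Z Y,\,\mathcal T_{X\times_Z Y}) \to (\SA,\,\mathcal T_{\SA})$ satisfying $\sL\circ\Psi = \pi_X$ and $\sR\circ\Psi = \pi_Y$. The crucial feature is that $\Psi$ is a homeomorphism with respect to the \emph{given} subspace topology $\mathcal T_{X\times_Z Y}$; the compatibility with the projections is not needed for the conclusion as stated, but records that $\Psi$ intertwines the legs.

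Next I would build a smooth atlas on $X\times_Z Y$ by pulling back the atlas $\mathcal A_{\SA}$ along $\Psi$. For each chart $\phi\colon U\to\mathbb R^m$ in $\mathcal A_{\SA}$, set $\phi\circ\Psi$ on the set $\Psi^{-1}(U)$. Because $\Psi$ is a homeomorphism, each $\Psi^{-1}(U)$ is open in $X\times_Z Y$, these sets cover $X\times_Z Y$, and each $\phi\circ\Psi$ is a homeomorphism onto the open subset $\phi(U)$ of $\mathbb R^m$. For two such charts the transition function is $(\psi\circ\Psi)\circ(\phi\circ\Psi)^{-1} = \psi\circ\phi^{-1}$, which is smooth precisely because $\mathcal A_{\SA}$ is a smooth atlas. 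Thus $\{\phi\circ\Psi : \phi\in\mathcal A_{\SA}\}$ is a smooth atlas on $X\times_Z Y$, and I would take the manifold structure to be the maximal atlas containing it.

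Finally I would verify the remaining conditions in the definition of a manifold. The topology $\mathcal T_{X\times_Z Y}$ is the subspace topology inherited from $X\times Y$; since $X$ and $Y$ are manifolds, $X\times Y$ is Hausdorff and second countable, and both properties pass to subspaces, so $\mathcal T_{X\times_Z Y}$ satisfies the topological hypotheses (equivalently, these can be transported along $\Psi$ from $\SA$). The charts above are homeomorphisms for exactly this topology, so the atlas is compatible with $\mathcal T_{X\times_Z Y}$, whence $(X\times_Z Y,\,\mathcal T_{X\times_Z Y},\,\mathcal A)$ is a smooth manifold. I do not anticipate a serious obstacle: the only point demanding care is that the pulled-back charts be homeomorphisms for the given subspace topology rather than for some a priori different topology, and this is guaranteed precisely because the span isomorphism lives in $\Top$ and hence delivers a homeomorphism for $\mathcal T_{X\times_Z Y}$.
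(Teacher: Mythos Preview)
Your proposal is correct and follows essentially the same approach as the paper: use the span isomorphism in $\Top$ to obtain a homeomorphism between $\SA$ and $X\times_Z Y$, and transport the smooth structure of $\SA$ along it. The paper's proof is a two-sentence version of what you have written; you have simply unpacked explicitly what ``transporting the manifold structure'' entails (pulling back charts, checking transition functions, and verifying the Hausdorff and second-countable conditions).
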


\begin{proof}
Take $\Phi$ to be the unique span isomorphism in $\Top$ from $S$ to $(\pi_X, \pi_Y)$.  The homeomorphism $\Phi$ transports the manifold structure of $\SA$ to $X\times_Z Y$, giving it a manifold structure as well.
\end{proof}

If $S$ is a span in $\Dif$ that is paired with $(f,g)$, then the map $\Phi$, that is defined for each $s$ in $\SA$ by \[\Phi(s) = (\sL(s), \sR(s)),\] is a smooth map from $\SA$ to $X\times Y$.  If $X\times_ZY$ is an embedded submanifold of $X\times Y$, then $\Phi$ is a smooth map from $\SA$ to $X\times_ZY$ and is the unique such map, implying the following proposition:

\begin{proposition}\label{prop:embedsubmanpullback}
If $X\times_ZY$ is an embedded submanifold of $X\times Y$, then $(\pi_X, \pi_Y)$ is a span in $\Dif$ and a pullback of $(f,g)$.
\end{proposition}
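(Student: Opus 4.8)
The plan is to follow the blueprint sketched in the paragraph preceding the statement and to mirror the set-theoretic argument given earlier for $\Set$, inserting the one genuinely differential-geometric ingredient exactly where it is needed. First I would record that $(\pi_X,\pi_Y)$ is a span in $\Dif$: since $X\times_Z Y$ is an embedded submanifold of $X\times Y$, the inclusion $\iota\colon X\times_Z Y\hookrightarrow X\times Y$ is smooth, and $\pi_X$ and $\pi_Y$ are the composites of $\iota$ with the canonical (smooth) projections $\rho_X,\rho_Y$ of $X\times Y$, hence smooth maps with common source $X\times_Z Y$. The pairing of this span with the cospan $(f,g)$ is immediate: the feet agree by construction, and $f\circ\pi_X=g\circ\pi_Y$ holds because every point of $X\times_Z Y$ satisfies $f(x)=g(y)$ by the very definition of the fibered product.

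Next I would verify the universal property. Let $S$ be an arbitrary span in $\Dif$ paired with $(f,g)$, so that $f\circ \sL=g\circ \sR$, and define $\Phi\colon\SA\to X\times_Z Y$ by $\Phi(s)=(\sL(s),\sR(s))$. Three things must be checked: that $\Phi$ lands in $X\times_Z Y$, that it is smooth, and that it is the unique span morphism from $S$ to $(\pi_X,\pi_Y)$. The image condition is automatic from $f\circ\sL=g\circ\sR$, exactly as in the set-theoretic case. Uniqueness is equally formal: any span morphism $\Phi^\prime$ must satisfy $\pi_X\circ\Phi^\prime=\sL$ and $\pi_Y\circ\Phi^\prime=\sR$, and since the two projections jointly separate the points of $X\times_Z Y\subseteq X\times Y$, this forces $\Phi^\prime(s)=(\sL(s),\sR(s))=\Phi(s)$, so $\Phi^\prime=\Phi$.

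The step I expect to carry the real content is the smoothness of $\Phi$, and this is precisely where the embedded-submanifold hypothesis is essential. The map $s\mapsto(\sL(s),\sR(s))$ is smooth as a map into the ambient product $X\times Y$, being the pairing of the smooth maps $\sL$ and $\sR$. To upgrade this to smoothness as a map into $X\times_Z Y$, I would invoke the characteristic property of embedded submanifolds (see \cite{Lee}): a smooth map into $X\times Y$ whose image lies in an embedded submanifold is smooth as a map into that submanifold. For a fibered product that is merely an immersed submanifold, or no submanifold at all, this fails, which is exactly why transversality or the explicit embedded hypothesis is indispensable and why $\Dif$ in general lacks pullbacks. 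With smoothness in hand, the relations $\pi_X\circ\Phi=\sL$ and $\pi_Y\circ\Phi=\sR$ exhibit $\Phi$ as a span morphism in $\Dif$, and together with the uniqueness already established this shows that $(\pi_X,\pi_Y)$ is a pullback of $(f,g)$.
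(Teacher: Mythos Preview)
Your proposal is correct and follows essentially the same approach as the paper: the paper's proof is the brief paragraph immediately preceding the proposition, which defines $\Phi(s)=(\sL(s),\sR(s))$, observes it is smooth into $X\times Y$, and then invokes the embedded-submanifold hypothesis to conclude it is smooth into $X\times_Z Y$ and unique. You have simply fleshed out each step more explicitly, in particular making the appeal to the characteristic property of embedded submanifolds from \cite{Lee} precise.
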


Propositions \ref{prop:TransverseansFiberedProductA} and \ref{prop:embedsubmanpullback} together imply the following proposition:
\begin{proposition}
If $(f,g)$ is a cospan in $\Dif$ and $f$ and $g$ are transverse, then $(\pi_X, \pi_Y)$ is a pullback of $(f,g)$ in $\Dif$.  
\end{proposition}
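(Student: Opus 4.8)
The plan is to derive the statement as a direct corollary by chaining the two results that the preceding discussion isolates, passing through the set $X\times_Z Y$ and the manifold structure it acquires. I would proceed in two steps. For the first, I would invoke Proposition~\ref{prop:TransverseansFiberedProductA}: since $f$ and $g$ are transverse by hypothesis, that proposition guarantees that $X\times_Z Y$ is a smooth embedded submanifold of $X\times Y$ (of codimension $\dim Z$). The real content of this step lies upstream and need not be repeated here: via the identification $(f\times g)^{-1}(\Delta_Z)=X\times_Z Y$, it reduces to the transversality (preimage) theorem recorded in Proposition~\ref{prop:trans}, applied to the smooth map $f\times g$ and the diagonal $\Delta_Z\subseteq Z\times Z$.

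For the second step, I would feed the embeddedness just obtained into Proposition~\ref{prop:embedsubmanpullback}. Because $X\times_Z Y$ is now an embedded submanifold of $X\times Y$, that proposition yields at once that $(\pi_X,\pi_Y)$ is a span in $\Dif$ and a pullback of $(f,g)$. The mechanism there is that for any span $S$ in $\Dif$ paired with $(f,g)$ the canonical set-level map $s\mapsto(\sL(s),\sR(s))$ is already smooth into $X\times Y$, and, the target being an embedded submanifold, it factors smoothly and uniquely through $X\times_Z Y$; this is precisely the universal property verified in Proposition~\ref{prop:embedsubmanpullback}.

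I do not anticipate any genuine obstacle at the level of this statement, since it merely composes Proposition~\ref{prop:TransverseansFiberedProductA} (transversality $\Rightarrow$ embeddedness) with Proposition~\ref{prop:embedsubmanpullback} (embeddedness $\Rightarrow$ pullback in $\Dif$). The only point demanding care is bookkeeping: I would confirm that the manifold structure on $X\times_Z Y$ furnished by the first proposition is the very structure with respect to which $\pi_X$ and $\pi_Y$ are smooth and with respect to which the universal property of the second proposition is checked, so that both invocations refer to one and the same object of $\Dif$.
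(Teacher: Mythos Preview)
Your proposal is correct and matches the paper's own justification exactly: the paper states that the proposition follows from Propositions~\ref{prop:TransverseansFiberedProductA} and~\ref{prop:embedsubmanpullback} together, which is precisely the two-step chain (transversality $\Rightarrow$ embedded submanifold $\Rightarrow$ pullback in $\Dif$) that you outline.
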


The following example demonstrates that $X\times_ZY$ may be a manifold and $(\pi_X, \pi_Y)$ may a pullback of a cospan $(f,g)$ in $\Dif$, but $X\times_ZY$ is not an embedded submanifold of $X\times Y$.  In light of Proposition~\ref{prop:TransverseansFiberedProductA}, such an example requires the functions $f$ and $g$ to be non-transverse.  It also shows that the forgetful functor from $\Dif$ to $\Top$ does not preserve pullbacks.

\begin{example}\label{EX:FPDiffTop}
Take $X$ and $Z$ to be equal to $\mathds R$ and take $Y$ to be the one point manifold $\{y_0\}$.  Suppose that $f$ is a smooth function from $X$ to $Z$, $(a_n)$ is a sequence in $\mathds R$ that converges to a point $a_0$ that is not equal to $a_n$ for any natural number $n$, and the zero set of $f$ is the set $\{a_0\}\cup\{a_n\colon n\in \mathds N\}$.  Take $g$ to be the zero function from $Y$ to $Z$.  The set $X\times_ZY$ is the subset $\{(a_0, y_0)\}\cup\{(a_n, y_0)\colon n\in \mathds N\}$ of $X\times Y$.

The span $(\pi_X, \pi_Y)$ is paired with the cospan $(f,g)$ in $\Top$, where $\pi_X$ and $\pi_Y$ are the restrictions to $X\times_ZY$ of the continuous projections $\rho_X$ and $\rho_Y$ that respectively take $X\times Y$ to $X$ and $Y$, where $X\times Y$ is endowed with the product topology.  The span $(\pi_X, \pi_Y)$ is a pullback of $(f,g)$, where $X\times_ZY$ is endowed with the subspace topology $\mathcal T_S$.  In this topology, each set $\{(a_n, y_0)\}$ is an open set, where $n$ varies over $\mathds N$.  Any open set containing $(a_0, y_0)$ contains infinitely many points.  

Now view  $(f,g)$ as a cospan in $\Dif$, so that $X\times_ZY$ is a manifold.  Each point in $X\times_ZY$ must be contained in a neighborhood that is homeomorphic to a point, and so $X\times_ZY$ must be endowed with the discrete topology $\mathcal T_D$. In this case, the manifold $X\times_ZY$ is not an embedded submanifold of $X\times Y$ since its topology is not the subspace topology.  Nevertheless, the span $(\pi_X, \pi_Y)$ is a pullback of $(f,g)$ in $\Dif$.

The topology on the image of the manifold $X\times_ZY$ under the forgetful functor from $\Dif$ to $\Top$ is $\mathcal T_D$, which is a finer topology than $\mathcal T_S$. The forgetful functor $\F$ from $\Dif$ to $\Top$ maps the pullback $(\pi_X, \pi_Y)$, where maps $\pi_X$ and $\pi_Y$ have the manifold $\left(X\times_ZY, \mathcal T_D, \mathcal A_{X\times_ZY}\right)$ as their common source, to the span $(\pi_X, \pi_Y)$, where the maps have the topological space $(X\times_ZY, \mathcal T_D)$ as their common source.  The identity map taking $(X\times_ZY, \mathcal T_D)$ to $(X\times_ZY, \mathcal T_S)$ is a continuous span morphism from $(\pi_X, \pi_Y)$ to $(\pi_X, \pi_Y)$, but the inverse is not continuous.  This demonstrates that the forgetful functor from $\Dif$ to $\Top$ does not preserve pullbacks.
\end{example}

The maps $f$ and $g$ in Example~\ref{EX:FPDiffTop} are not transverse because $f$ is smooth and constant on a sequence that is convergent to $a_0$, and so ${\rm d}f(T_{a_0}\mathds R)$ is zero-dimensional.  This example demonstrates that $f$ and $g$ may be non-transverse morphisms in $\Dif$, but $(f,g)$ nevertheless has a pullback in $\Dif$.  The forgetful functor $\F$ from $\Dif$ to $\Set$ preserves pullbacks, and so if $Q$ is a span in $\Dif$ and a pullback of $(f,g)$, then $\F(Q)$ is a span in $\Set$ that is a pullback of $(\F(f),\F(g))$.  Since $\Set$ has pullbacks, there is a span isomorphism in $\Set$ from $\F(Q)$ to $(\F(\pi_X), \F(\pi_Y))$.  This span isomorphism is only a bijection, and there should be no expectation that it preserves topological structure.  

The category $\Top$ also has pullbacks, and so if $(f,g)$ is a cospan in $\Top$, then a pullback of $(f,g)$ will exist and, in fact, the span $(\pi_X, \pi_Y)$ in $\Top$ is a pullback of $(f,g)$ where the maps $\pi_X$ and $\pi_Y$ have $(X\times_ZY,\mathcal T_S)$ as their common source.  Since the forgetful functor from $\Dif$ to $\Top$ does not preserve pullbacks, there is no guarantee that $Q$ being a pullback of $(f,g)$ implies that it is a pullback when mapped by a forgetful functor to $\Top$, as is the case in Example~\ref{EX:FPDiffTop}.  

The former discussion, and the example that precedes it, demonstrate that there is some subtlety involved in determining that $\Dif$ does not have pullbacks.  The proof of Proposition~\ref{Prop:NoDifPullback} presents an elementary counterexample. The functions given in the statement of Lemma~\ref{Lem:NoDifPullback} form this commutative diagram:

\medskip

\begin{center} 
\begin{tikzpicture}[->=stealth',node distance=2.25cm, auto, scale = .9]
\node(W) at (0,0) {$X$};
\node(X) at (0,2) {$X\times_Z Y$};
\node(Y) at (2.5,2) {$Y$};
\node(Z) at (2.5,0) {$Z$};
\node(R) at (-1.65,3.32) {$\QA$};
\draw[->] (X) to node[inner sep = 5, above] {$\pi_Y$}(Y);
\draw[->] (X) to node[inner sep = 5, left] {$\pi_X$}(W);
\draw[->] (Y)to node[inner sep = 5, right] {$g$}(Z);
\draw[->] (W)to node[inner sep = 5, below] {$f$}(Z);
\draw[->, densely dashed] (R) to (X);
\node[] at (-.48, 2.9) {$\Phi$};
\draw[->,bend right] (R) to node[inner sep = 5, below]{\phantom{$\displaystyle\int$}$\qL$\phantom{$\displaystyle\int$}}(W);
\draw[->,bend left](R)to node[inner sep = 5, above]{$\qR$}(Y);
\end{tikzpicture}
\end{center}

\begin{lemma}\label{Lem:NoDifPullback}
For any cospan $(f,g)$ in $\Dif$ and for any pullback $Q$ of $(f,g)$ in $\Dif$, there is a bijective span morphism $\Phi$ in $\Top$ from $\F(Q)$ to $(\pi_X, \pi_Y)$, where $\F$ is the forgetful functor from $\Dif$ to $\Top$, and $X\times_ZY$ is endowed with the subspace topology.
\end{lemma}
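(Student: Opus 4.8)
The plan is to exhibit the required span morphism explicitly on underlying sets and then verify, in turn, that it is well defined, bijective, and continuous into the topology $\mathcal T_S$. Since $S$ is a pullback of $(f,g)$ it is in particular paired with $(f,g)$, so that $f\circ\sL = g\circ\sR$. I would define $\Phi\colon \SA \to X\times_Z Y$ on underlying sets by $\Phi(s) = (\sL(s),\sR(s))$. The pairing identity guarantees $f(\sL(s)) = g(\sR(s))$, so the pair $(\sL(s),\sR(s))$ indeed lands in the fibered product $X\times_Z Y$ and $\Phi$ is well defined. By construction $\pi_X\circ\Phi = \sL$ and $\pi_Y\circ\Phi = \sR$, so $\Phi$ is a span morphism on the level of sets, and it is moreover the unique set map with this property.

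For bijectivity I would pass to $\Set$. By Proposition~\ref{3:prop:repfun} the forgetful functor from $\Dif$ to $\Set$ preserves pullbacks, so the image of $S$ under it is a pullback of $(f,g)$ in $\Set$. The construction in Section~\ref{sec:physsys} for categories that have pullbacks shows that the fibered product span $(\pi_X,\pi_Y)$ is likewise a pullback of $(f,g)$ in $\Set$. Since $\Phi$ is the unique span morphism from the first of these pullbacks to the second, and any span morphism between two pullbacks of one cospan is a span isomorphism (by the universal property of a pullback together with Proposition~\ref{prop:spammap}), the map $\Phi$ is a bijection of sets.

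It then remains to check continuity, and this is where the choice of topology $\mathcal T_S$ is essential. Viewing $\sL$ and $\sR$ as morphisms in $\Top$ via $\F$, they are continuous. The topology $\mathcal T_S$ is the subspace topology that $X\times_Z Y$ inherits from the product topology on $X\times Y$, so by the universal properties of the product topology and the subspace topology a map into $(X\times_Z Y,\mathcal T_S)$ is continuous precisely when its two coordinate components are continuous. Concretely, $\Phi$ is continuous if and only if $\pi_X\circ\Phi=\sL$ and $\pi_Y\circ\Phi=\sR$ are continuous, which they are. Hence $\Phi$ is a continuous bijection, that is, a bijective span morphism in $\Top$ from $\F(S)$ to $(\pi_X,\pi_Y)$.

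The one genuinely substantive point to guard against is the temptation to strengthen the conclusion: continuity is obtained only in this single direction. We do not claim $\Phi$ is a homeomorphism, and in fact the inverse $\Phi^{-1}$, which would have to carry the coarser topology $\mathcal T_S$ back to the manifold topology on $\SA$, need not be continuous. This asymmetry is exactly what the surrounding discussion exploits to show that the forgetful functor from $\Dif$ to $\Top$ fails to preserve pullbacks, so recognizing that the bijection cannot in general be upgraded is the crux of the argument rather than any difficulty in its verification.
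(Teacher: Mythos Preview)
Your argument is correct and follows essentially the same route as the paper: define $\Phi(s)=(\sL(s),\sR(s))$, use the pairing to land in $X\times_Z Y$, invoke Proposition~\ref{3:prop:repfun} to conclude bijectivity in $\Set$, and obtain continuity from that of the components into the subspace topology $\mathcal T_S$. The paper phrases the continuity step as ``$\Phi$ is smooth into $X\times Y$, hence continuous into the subspace $X\times_Z Y$,'' which is the same observation you make via the universal property of the product and subspace topologies; your closing remark that $\Phi^{-1}$ need not be continuous is apt and mirrors the paper's surrounding discussion.
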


To clarify, Lemma~\ref{Lem:NoDifPullback} guarantees the existence of a bijective span morphism $\Phi$ in $\Top$, but $\Phi$ is not guaranteed to be an isomorphism in $\Top$ since its inverse need not be continuous.  

\begin{proof}
Take $Q$ to be a span in $\Dif$ that is a pullback of $(f,g)$. Define for each $a$ in $\QA$ the function $\Phi$ by \begin{equation}\label{III:Lem:Eq:Phi}\Phi(a) = (\qL(a), \qR(a)).\end{equation} The map $\Phi$ from $\QA$ to $X\times Y$ is smooth because the functions $\qL$ and $\qR$ are smooth.  The span $Q$ is paired with $(f,g)$, implying that the range of $\Phi$ is $X\times_Z Y$, and so $\Phi$ is a continuous function from $\QA$ to $X\times_Z Y$, where $X\times_ZY$ is endowed with the subspace topology.  Proposition~\ref{3:prop:repfun} implies that the forgetful functor ${\mathcal G}$ from $\Dif$ to $\Set$ preserves pullbacks, therefore ${\mathcal G}(\Phi)$ is a span morphism in $\Set$ from ${\mathcal G}(Q)$ to $(\pi_X, \pi_Y)$, where the pair of projections is viewed only as a pair of maps in $\Set$.  The span $Q$ is a pullback in $\Dif$, hence ${\mathcal G}(Q)$ is a span in $\Set$ that is a pullback of $(f,g)$, and so the map ${\mathcal G}(\Phi)$ is a bijection.  Maps between manifolds and topological spaces are determined by their behavior on the underlying sets, hence $\F(\Phi)$ is a continuous bijection.
\end{proof}

Although the fact that $\Dif$ does not have pullbacks is commonly cited in the literature, we found it difficult to locate a detailed proof of this fact and so present one here.

\begin{proposition}\label{Prop:NoDifPullback}
The category $\Dif$ does not have pullbacks.
\end{proposition}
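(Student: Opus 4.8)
The plan is to exhibit a single cospan in $\Dif$ that has no pullback; since the universal property forces any pullback to be unique up to span isomorphism, it suffices to rule out existence for one well-chosen cospan. Take $X = Y = Z = \mathds R$ and let $f, g \colon \mathds R \to \mathds R$ be $f(x) = x^2$ and $g(y) = y^2$, so that $(f,g)$ is a cospan in $\Dif$. The underlying set of the fibered product is
\[ X \times_Z Y = \{(x,y) \in \mathds R^2 \colon x^2 = y^2\} = \{x = y\} \cup \{x = -y\}, \]
the union of the two diagonals, which has empty interior in $\mathds R^2$. The strategy is to assume a pullback $S$ of $(f,g)$ exists in $\Dif$ and then to produce a single point of the apex $\SA$ at which the smooth map $(\sL, \sR)\colon \SA \to \mathds R^2$ is a submersion, contradicting the fact that its image is contained in this thin set.

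First I would record the consequences of $S$ being a pullback. Because $S$ is paired with $(f,g)$, the assignment $a \mapsto (\sL(a), \sR(a))$ is a smooth map $(\sL,\sR)\colon \SA \to \mathds R^2$ whose image lies in $X \times_Z Y$. By Lemma~\ref{Lem:NoDifPullback} this same map is, moreover, a bijection from $\SA$ onto $X \times_Z Y$. Next I would feed two test spans into the universal property. Let $Q_1$ have apex $\mathds R$ with both legs the identity, and let $Q_2$ have apex $\mathds R$ with left leg the identity and right leg $t \mapsto -t$; both satisfy $f \circ \qL = g \circ \qR$ and so are paired with $(f,g)$. The universal property then yields unique span morphisms $\Phi_1, \Phi_2 \colon \mathds R \to \SA$ in $\Dif$ with $(\sL, \sR)\circ \Phi_1(t) = (t,t)$ and $(\sL,\sR)\circ\Phi_2(t) = (t,-t)$. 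Since $(\sL,\sR)$ is injective and both composites send $0$ to the origin, $\Phi_1(0) = \Phi_2(0) =: p$.

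The crux is a differential computation at this common point $p$. Differentiating the two relations at $t = 0$ gives
\[ \d(\sL,\sR)_p\big(\Phi_1'(0)\big) = (1,1) \quad \text{and} \quad \d(\sL,\sR)_p\big(\Phi_2'(0)\big) = (1,-1). \]
As $(1,1)$ and $(1,-1)$ are linearly independent in $\mathds R^2$, the differential $\d(\sL,\sR)_p$ is surjective, so $(\sL,\sR)$ is a submersion at $p$ and hence an open map on a neighborhood of $p$. Its image therefore contains an open neighborhood of the origin in $\mathds R^2$. This contradicts the containment of the image in $X \times_Z Y$, which has empty interior. Hence $(f,g)$ admits no pullback in $\Dif$, and so $\Dif$ does not have pullbacks.

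I expect the main obstacle to be the passage from the abstract hypothesis ``a pullback exists'' to usable smooth data: one must extract from the universal property two concrete smooth curves through a \emph{single} apex point whose velocity vectors are transverse. The two test spans $Q_1, Q_2$, together with the injectivity of $(\sL,\sR)$ supplied by Lemma~\ref{Lem:NoDifPullback}, are exactly what make the common point $p$ and the rank-two differential available; everything after the differential computation is a routine application of the local submersion theorem.
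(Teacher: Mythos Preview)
Your argument is correct and takes a genuinely different route from the paper's. Both proofs use the same cospan $f=g\colon x\mapsto x^2$ and the bijection $(\sL,\sR)\colon\SA\to X\times_ZY$ supplied by the representability of the forgetful functor (packaged as Lemma~\ref{Lem:NoDifPullback}), but they diverge after that. The paper argues \emph{globally and topologically}: it first pins down $\dim\SA=1$ by restricting to a quadrant where $f$ and $g$ are submersions and invoking Proposition~\ref{prop:TransverseansFiberedProductDim}, then appeals to the classification of $1$-manifolds and counts connected components of $\SA$ minus a point versus $X\times_ZY\setminus\{(0,0)\}$. Your argument is \emph{local and differential}: feeding the two diagonal curves $t\mapsto(t,\pm t)$ through the universal property and using injectivity to force them through a common apex point $p$ gives two tangent vectors whose images $(1,\pm1)$ span $\mathds R^2$, so $(\sL,\sR)$ is a submersion at $p$ and its image has nonempty interior. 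Your approach is shorter and avoids both the dimension bootstrap via transversality and the classification of $1$-manifolds; the paper's approach, on the other hand, makes more visibly explicit why the crossing at the origin is the obstruction, and its connected-components count would survive in lower-regularity settings where the local submersion theorem is unavailable.
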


\begin{proof}

Take $X$, $Y$, and $Z$ to be equal to $\mathds R$.  Take $f$ and $g$ to be the respective functions from $X$ to $Z$ and $Y$ to $Z$ given for each $x$ in $X$ and $y$ in $Y$ by \[f(x) = x^2 \quad {\rm and}\quad g(y) = y^2.\] Denote by ${\rm Id}$ the identity function from $\mathds R$ to $\mathds R$.  For any pullback $Q$ in $\Dif$ of $(f,g)$, since the span $({\rm Id}, {\rm Id})$ is paired with $(f,g)$, there is a span morphism $\Phi_1$ in $\Dif$ from $({\rm Id}, {\rm Id})$ to $Q$.  The composite $\qL\circ \Phi_1$  is equal to ${\rm Id}$, and so it is injective.  Therefore, $\Phi_1$ is injective.  Since $\Phi_1$ is a continuous injection from $\mathds R$ to $\QA$, the dimension of $\QA$ is at least 1.

The fibered product $X\times_Z Y$ is the set \[X\times_Z Y = \{(x,y)\colon |x| = |y|\}.\]  Take $\F$ to be the forgetful functor from $\Dif$ to $\Set$.  The span $(\pi_X, \pi_Y)$ is a pullback in $\Set$ of $(\F(f), \F(g))$.  There are intervals $I$ and $J$, each equal to either $(-\infty, 0)$ or $(0, \infty)$, so that the set $U$ is nonempty, where $U$ is given by \[U = \qL^{-1}(I)\cap \qR^{-1}(J).\]  Otherwise, the span $(0_X, 0_Y)$ would be a pullback in $\Dif$ of $(f,g)$, where $0_X$ and $0_Y$ are the respective zero functions from the one point manifold $\bf 1$ to $X$ and $Y$. Since $\F$ preserves pullbacks, there would then be a bijection between the underlying set $\bf 1$ and $X\times_ZY$.


The set $U$ is an open subset of $\QA$ as a non-empty intersection of open sets and is, therefore, a manifold.  The restricted functions $f\big|_I$ and $g\big|_J$ are surjective submersions onto $(0, \infty)$.  Proposition~\ref{prop:TransverseansFiberedProductDim} implies that the fibered product $I\times_{(0,\infty)}J$ is a one-dimensional manifold.  The span $(\pi_{I}, \pi_{J})$ is a pullback of $\big(f\big|_{I}, g\big|_{J}\big)$.  Since $\big(\qL\big|_U, \qR\big|_U\big)$ is paired with $\big(f\big|_{I}, g\big|_{J}\big)$, there is a span morphism $\Phi_2$ in $\Dif$ from $\big(\qL\big|_U, \qR\big|_U\big)$ to $(\pi_{I}, \pi_{J})$.  The function $\Phi_2$ is just the restriction of $\Phi$, given by \eqref{III:Lem:Eq:Phi}, to $U$.  Since $\Phi_2$ is an injection, $U$ has dimension at most 1, and so the dimension of $U$ is equal to 1.

The dimension of the manifold $\QA$ is also equal to 1 since $\QA$ contains $U$ as an open subset, and so $\QA$ is homeomorphic to either a line, an open interval, a half-open interval, or a circle, \cite{Gale}.  The subspace $X\times_ZY\setminus \{(0,0)\}$ of $X\times Y$ has four connected components, but the preimage $\Phi^{-1}(X\times_ZY\setminus \{(0,0)\})$ is the set $\QA$ with one point removed, and so has either one or two connected components. However, Lemma~\ref{Lem:NoDifPullback} implies that $\Phi$ is continuous.  Since $X\times_ZY\setminus \{(0,0)\}$ cannot have more connected components than its preimage, $Q$ cannot be a pullback of $(f,g)$, and so $(f,g)$ does not have a pullback in $\Dif$.%
\end{proof}


\section{Composition by $\F$-Pullbacks}\label{sec:spantight}

Assume, henceforth, that $\C$ and $\C^\prime$ are categories and that $\F$ is a functor from $\C$ to $\C^\prime$.  For any span $S$ in $\C$, denote by $\F(S)$ the span $(\F(\sL), \F(\sR))$ in $\C^\prime$.  For any cospan $C$ in $\C$, denote by $\F(C)$ the cospan $(\F(\cL), \F(\cR))$ in $\C^\prime$.

\subsection{$\F$-Pullbacks and Span Tightness}

\begin{definition}
The category $\C$ \emph{has $\F$-pullbacks in} $\C^\prime$ if for any cospan $C$ in $\C$, there is a span $S$ in $\C$ that is paired with $C$ and the span $\F(S)$ is a pullback of the cospan $\F(C)$ in $\C^\prime$.  In this case, the span $S$ is an \emph{$\F$-pullback} of $C$.
\end{definition}

Note that if $\C^\prime$ is equal to $\C$ and $\F$ is the identity functor, then an $\F$-pullback is simply a pullback.

\begin{definition}
Suppose that $S$ and $Q$ are spans in $\C$ such that: 
\begin{enumerate}
\item[(1)] $\SR = \QL$;
\item[(2)] there is a span $P$ in $\C$ that is a pullback of the cospan $(\sR, \qL)$.
\end{enumerate}
The \emph{composite of $S$ and $Q$ along $P$} is the span in $\C$ given by \[S\circ_P Q = (\sL\circ \pL, \qR\circ \pR).\] If $P$ is an $\F$-pullback, then the span $S\circ_P Q$ is an \emph{$\F$-pullback composite of $S$ and $Q$ along $P$}.
\end{definition}

These morphisms determine the construction of the composite span $S\circ_P Q$ along the $\F$-pullback $P$:

\medskip

\begin{center} 
\begin{tikzpicture}[->=stealth',node distance=2.25cm, auto, scale=.8]
\node(SL) at (-1.5,0) {$\SL$};
\node(SR) at (1.5,0) {$\SR = \QL$};
\node(SA) at (0,2) {$\SA$};
\draw[->] (SA) to node[yshift = 6, inner sep = 2, left] {$\sL$} (SL);
\draw[->] (SA) to node[yshift = 6, inner sep = 2, right] {$\sR$} (SR);

\node(QR) at (4.5,0) {$\QR$};
\node(RA) at (3,2) {$\QA$};
\draw[->] (RA) to node[yshift = 6, inner sep = 2, left] {$\qL$} (SR);
\draw[->] (RA) to node[yshift = 6, inner sep = 2, right] {$\qR$} (QR);

\node(QA) at (1.5,4) {$P_A$};
\draw[->] (QA) to node[yshift = 6, inner sep = 2, left] {$\pL$} (SA);
\draw[->] (QA) to node[yshift = 6, inner sep = 2, right] {$\pR$} (RA);

\end{tikzpicture}
\end{center}

\noindent The composite span $S\circ_P Q$ is this diagram:

\medskip

\begin{center}

\begin{tikzpicture}[->=stealth',node distance=2.25cm, auto, scale=.8]

\node(SL) at (-1.5,0) {$\SL$};

\node(RR) at (1.5,0) {$\QR$};

\node(QA) at (0,2) {$P_A$};
\draw[->] (QA) to node[inner sep = 8, left] {$\sL\circ\pL$} (SL);
\draw[->] (QA) to node[inner sep = 8, right] {$\qR\circ\pR$} (RR);

\end{tikzpicture}
\end{center}

\begin{definition}
Suppose that $\C$ has $\F$-pullbacks in $\C^\prime$.  The functor $\F$ is \emph{span tight} if for any $\F$-pullbacks $S$ and $Q$ of the same cospan, the unique span isomorphism $\Phi$ from $\F(S)$ to $\F(Q)$ is $\F(\Psi)$ for some span isomorphism $\Psi$ from $S$ to $Q$.
\end{definition}

\subsection{$\F$-Pullbacks of $\SurjSub$}

Suppose that $X$, $Y$, and $Z$ are smooth manifolds.  Suppose further that $f$ is a surjective submersion from $X$ to $Z$ and that $g$ is a surjective submersion from $Y$ to $Z$.  Proposition~\ref{prop:TransverseansFiberedProductDim} implies that the fibered product $X\times_ZY$ is an embedded submanifold of $X\times Y$.  Again, denote by $\rho_X$ and $\rho_Y$  the respective projections from $X\times Y$ to $X$ and $Y$, and let $\pi_X$ and $\pi_Y$ be their respective restrictions to $X\times_ZY$.  

\begin{proposition}\label{prop:TransverseansFiberedProductPB}
For any cospan $\(f,g\)$ in $\SurjSub$, the span $\(\pi_X, \pi_Y\)$ is a pullback in $\Dif$\! of $\(f,g\)$.  
 \end{proposition}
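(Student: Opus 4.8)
The plan is to reduce the statement to the transverse case already settled by Proposition~\ref{prop:TransverseansFiberedProductA}, exploiting the fact that the morphisms of the enclosing category $\SurjSub$ are surjective submersions and that any pair of submersions into a common target manifold is automatically transverse. First I would recall that if $f$ is a submersion, then for every $x$ in $X$ the differential $\d f$ maps $T_x X$ onto $T_{f(x)} Z$. Consequently, for any $x$ in $X$ and $y$ in $Y$ with $f(x) = g(y) = z$, one already has $\d f(T_x X) = T_z Z$, so that \[\d f(T_x X) + \d g(T_y Y) = T_z Z\] holds trivially. By Definition~\ref{Def:Trans}, this shows that $f$ and $g$ are transverse.

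With transversality established, the rest is a direct appeal to the earlier results. Proposition~\ref{prop:TransverseansFiberedProductA} then furnishes simultaneously the fact that $X\times_Z Y$ is a smooth embedded submanifold of $X\times Y$ --- precisely the structure the statement presupposes in naming the embedded submanifold $X\times_Z Y$ --- and the conclusion that the span $(\pi_X, \pi_Y)$ is a pullback of $(f,g)$ in $\Dif$. Alternatively, I could first invoke Proposition~\ref{prop:TransverseansFiberedProductDim} to obtain the embedded submanifold structure and then Proposition~\ref{prop:embedsubmanpullback} to conclude that $(\pi_X, \pi_Y)$ is a pullback; either route closes the argument.

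There is essentially no genuine obstacle here: the entire content of the proposition is the single observation that two submersions into a common manifold are transverse, after which every assertion follows from the machinery developed earlier in the section. The only point requiring any care is to check that the transversality condition of Definition~\ref{Def:Trans} holds at every pair $(x,y)$ with $f(x) = g(y)$; but since being a submersion is a pointwise condition on the differential, the equality $\d f(T_x X) = T_z Z$ holds at every such point and the verification is uniform.
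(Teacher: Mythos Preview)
Your proposal is correct, but it takes a different route from the paper's own proof. The paper does not reduce to the earlier transversality results; instead it gives a direct, self-contained verification of the universal property: for any span $Q$ in $\Dif$ paired with $(f,g)$, define $\Psi\colon Q_A \to X\times Y$ by $\Psi(a) = (q_L(a), q_R(a))$, observe that $\Psi$ is smooth and lands in the embedded submanifold $X\times_Z Y$, and conclude that $\Psi$ is the unique span morphism to $(\pi_X,\pi_Y)$. This is essentially a repetition of the argument behind Proposition~\ref{prop:embedsubmanpullback}, reproved in place rather than cited.

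Your approach is more economical: you invoke the machinery already built. One small point of care is worth flagging. The paper's setup for this proposition literally hypothesizes only that $f$ and $g$ are smooth maps and that $X\times_Z Y$ is an embedded submanifold; it does not explicitly assume $f,g$ lie in $\SurjSub$ until the paragraph \emph{after} the proposition. Your primary argument reads the $\SurjSub$ hypothesis in from the section title, which is reasonable but slightly ahead of the text. However, your alternative route via Proposition~\ref{prop:embedsubmanpullback} requires only the embedded-submanifold hypothesis and matches the paper's stated assumptions exactly, so your proof is complete on either reading. The net effect is the same; what you gain is brevity and reuse of prior work, while the paper's direct argument keeps Section~\ref{sec:spantight} self-contained.
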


\begin{proof}
\begin{pf}
For any span $Q$ in $\Dif$ that is paired with the cospan $\(f,g\)$, take $\Phi$ to be the map from $\QA$ to $X\times Y$ that is given for any $a$ in $\QA$ by \[\Phi(a) = (\qL(a),\qR(a)).\]  This map is smooth as a product of smooth maps and unique since $\Dif$ has products.  Furthermore, for any $a$ in $\QA$, \[(f\circ\rho_X\circ\Phi)\!\(a\) = f\!\(\qL\!\(a\)\) \quad {\rm and} \quad (g\circ\rho_Y\circ\Phi)\!\(a\) = g\!\(\qR\!\(a\)\).\]  Since $Q$ is paired with $\(f,g\)$, $f\!\(\qL\!\(a\)\)$ is equal to $g\!\(\qR\!\(a\)\)$, and so $\Phi\!\(a\)$ is in $X\times_ZY$.  Therefore, the span $\(\pi_X, \pi_Y\)$ is a pullback in $\Dif$.
\end{pf}
\end{proof}

Note that while $\SurjSub$ is a subcategory of $\Dif$, the category $\SurjSub$ does not have pullbacks. Take $\F$ to be the inclusion functor from $\SurjSub$ to $\Dif$.  For any cospan $\(f, g\)$ in $\SurjSub$, where $f$ and $g$ have respective sources $X$ and $Y$ and both maps have target $Z$, Proposition~\ref{prop:TransverseansFiberedProductPB} implies that the span $\(\pi_X, \pi_Y\)$ is an $\F$-pullback of the cospan $\(f, g\)$.  Take $Q$ to be another $\F$-pullback of $(f,g)$.  There is a span isomorphism $\Phi$ from $\F(Q)$ to $\(\F(\pi_X), \F(\pi_Y)\)$, since both spans are pullbacks of $(\F(f),\F(g))$ in $\Dif$.  Every diffeomorphism is a surjective submersion, and this implies Theorem~\ref{SurjSubTight}. 

\begin{theorem}\label{SurjSubTight}
The inclusion functor from $\SurjSub$ to $\Dif$ is span tight.
\end{theorem}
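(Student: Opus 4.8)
The plan is to recognize that span tightness here rests on a single elementary fact: every diffeomorphism, together with the inverse of every diffeomorphism, is a surjective submersion. The preceding discussion, via Proposition~\ref{prop:TransverseansFiberedProductPB}, already exhibits for each cospan $(f,g)$ in $\SurjSub$ the span $(\pi_X,\pi_Y)$ as an $\F$-pullback, so $\SurjSub$ has $\F$-pullbacks in $\Dif$ and the definition of span tightness applies. It then remains only to verify the defining condition.

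First I would fix a cospan $C$ in $\SurjSub$ and two $\F$-pullbacks $S$ and $Q$ of $C$. By the definition of an $\F$-pullback, $\F(S)$ and $\F(Q)$ are spans in $\Dif$ that are both pullbacks of $\F(C)$. Since any two pullbacks of a single cospan are related by a unique span isomorphism, there is a unique span isomorphism $\Phi$ from $\F(S)$ to $\F(Q)$ in $\Dif$; this is precisely the map named in the definition of span tightness, and the task is to realize it as $\F(\Psi)$ for a span isomorphism $\Psi$ of $\SurjSub$.

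The heart of the argument is lifting $\Phi$ to a span isomorphism in $\SurjSub$. As an isomorphism of $\Dif$, the map $\Phi$ is a diffeomorphism from $\SA$ to $\QA$, and both apices are objects of $\SurjSub$ because $S$ and $Q$ are spans there. Since $\Phi$ and its inverse are diffeomorphisms, they are surjective submersions, hence morphisms of $\SurjSub$; consequently $\Phi$, viewed in $\SurjSub$, is an isomorphism there. The span-morphism identities $\sL = \qL\circ\Phi$ and $\sR = \qR\circ\Phi$ that $\Phi$ satisfies in $\Dif$ are equalities of smooth maps, so they persist in $\SurjSub$, and therefore $\Psi = \Phi$ is a span isomorphism from $S$ to $Q$ in $\SurjSub$. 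As $\F$ is the inclusion functor, $\F(\Psi) = \Phi$, which is exactly what span tightness demands.

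I anticipate no genuine difficulty once the governing fact is isolated; the argument is immediate. The one point deserving care is to realize the \emph{particular} span isomorphism $\Phi$ provided by the universal property in $\Dif$, rather than merely producing \emph{some} span isomorphism between $S$ and $Q$ in $\SurjSub$. Taking $\Psi$ to be $\Phi$ itself, instead of building an independent map, settles this automatically.
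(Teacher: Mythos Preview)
Your proposal is correct and follows essentially the same approach as the paper: the paper's justification is the single sentence that Proposition~\ref{prop:TransverseansFiberedProductPB} gives the $\F$-pullbacks and that ``every diffeomorphism is a surjective submersion'' yields the rest, and your argument is precisely an unpacking of that sentence. Your care in taking $\Psi=\Phi$ so that $\F(\Psi)=\Phi$ on the nose, rather than merely producing some span isomorphism, is exactly the right reading of the definition.
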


\section{Generalized Span Category}\label{sec:genSpan}

Define the objects in ${\rm Span}\!\(\C,\F\)$ to be the objects in $\C$ and the isomorphism classes of spans in $\C$ to be the morphisms in ${\rm Span}\!\(\C,\F\)$.  For any isomorphism class of spans $[S]$ in ${\rm Span}\!\(\C, \F\)$, identify $\SR$ and $\SL$ respectively to be the source and target of $[S]$.  Define composition of isomorphism classes of spans by \[\left[S^{\1}\right] \circ \left[S^{\2}\right] = \left[S^{\1} \circ_P S^{\2}\right],\] where $S^{\1} \circ_P S^{\2}$ is an $\F$-pullback composite of $S^{\1}$ and $S^{\2}$.  Theorem~\ref{thm:SpanCatIfTight} is the main result of the section and the lemmata that follow simplify the proof of the theorem.

\begin{theorem}\label{thm:SpanCatIfTight}
If $\F$ is a span tight functor from $\C$ to $\C^\prime$, then ${\rm Span}\!\(\C, \F\)$ is a category.
\end{theorem}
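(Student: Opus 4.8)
The plan is to check the category axioms for ${\rm Span}(\C,\F)$---that composition descends to isomorphism classes, is associative, and has the spans $[(I,I)]$ as units---and to arrange matters so that span tightness is invoked in exactly one place, a well-definedness lemma that then drives everything else. Since $\F$ is span tight, $\C$ has $\F$-pullbacks in $\C^\prime$, so every composite $S\circ_P Q$ exists. The lemma I would prove first is: if $\SR=\QL$ and $P,P^\prime$ are two $\F$-pullbacks of the cospan $(\sR,\qL)$, then $S\circ_P Q$ and $S\circ_{P^\prime} Q$ are span isomorphic. This is the sole use of span tightness. Both $\F(P)$ and $\F(P^\prime)$ are pullbacks of the image cospan in $\C^\prime$, so there is a unique span isomorphism $\Phi$ between them; span tightness writes $\Phi=\F(\Psi)$ for a span isomorphism $\Psi\colon P\to P^\prime$ in $\C$, and the identities $\sL\circ\pL=\sL\circ p_L^\prime\circ\Psi$ and $\qR\circ\pR=\qR\circ p_R^\prime\circ\Psi$ show that $\Psi$ is itself a span isomorphism $S\circ_P Q\to S\circ_{P^\prime} Q$.

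With the lemma in hand I would next establish independence of representatives. Given span isomorphisms $\alpha\colon S\to S^\prime$ and $\beta\colon Q\to Q^\prime$ and an $\F$-pullback $P$ of $(\sR,\qL)$, the span with apex $\PA$ and legs $\alpha\circ\pL$ and $\beta\circ\pR$ is an $\F$-pullback of $(s_R^\prime,q_L^\prime)$: it is paired with that cospan, and since $\F(\sR)=\F(s_R^\prime)\circ\F(\alpha)$ and $\F(\qL)=\F(q_L^\prime)\circ\F(\beta)$, composing the legs of the pullback $\F(P)$ with the isomorphisms $\F(\alpha),\F(\beta)$ produces a pullback of $(\F(s_R^\prime),\F(q_L^\prime))$. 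For this $\F$-pullback the composite $S^\prime\circ_{P^\prime} Q^\prime$ equals $S\circ_P Q$ on the nose, so the lemma gives $[S]\circ[Q]=[S^\prime]\circ[Q^\prime]$. The unit laws I would dispatch by exhibiting convenient $\F$-pullbacks: the span $(\mathrm{Id}_{\SA},\sR)$ is an $\F$-pullback of $(\sR,\mathrm{Id}_{\SR})$, because it is paired and a pullback along an identity in $\C^\prime$ has an identity leg, and composing $S$ along it returns $S$ verbatim; the left unit is symmetric, and well-definedness upgrades these to $[S]\circ[(I,I)]=[S]=[(I,I)]\circ[S]$.

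The main obstacle is associativity, and the idea is to build one symmetric $\F$-pullback to which both parenthesizations reduce. For composable $S^{1},S^{2},S^{3}$ I would choose an $\F$-pullback $P$ of $(s^{1}_{R},s^{2}_{L})$ with legs $\pL,\pR$, an $\F$-pullback $Q^\prime$ of $(s^{2}_{R},s^{3}_{L})$ with legs $q_L^\prime,q_R^\prime$, and then an $\F$-pullback $T$ of the cospan $(\pR,q_L^\prime)$ over $S^{2}_{A}$, with legs $t_L,t_R$. The crux is that this single $T$ is simultaneously an $\F$-pullback of each outer cospan the two associations demand: with legs $(t_L,\,q_R^\prime\circ t_R)$ it is an $\F$-pullback of $(s^{2}_{R}\circ\pR,\,s^{3}_{L})$, the cospan appearing in $(S^{1}\circ_P S^{2})\circ S^{3}$, and with legs $(\pL\circ t_L,\,t_R)$ it is an $\F$-pullback of $(s^{1}_{R},\,s^{2}_{L}\circ q_L^\prime)$, the cospan appearing in $S^{1}\circ(S^{2}\circ_{Q^\prime} S^{3})$. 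Each pairing relation follows by chasing the pairings of $P$, $Q^\prime$, and $T$, and after applying $\F$ each of the two spans is identified with the triple pullback---the limit of the evident zig-zag diagram in $\C^\prime$---by the pasting property of pullbacks.

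Granting these two identifications, I would finish by applying the well-definedness lemma on each side: since both $P^\prime$ and $T$ are $\F$-pullbacks of $(s^{2}_{R}\circ\pR,s^{3}_{L})$, the composite $(S^{1}\circ_P S^{2})\circ_{P^\prime} S^{3}$ is span isomorphic to the composite built along $T$; since both $Q$ and $T$ are $\F$-pullbacks of $(s^{1}_{R},s^{2}_{L}\circ q_L^\prime)$, the composite $S^{1}\circ_{Q}(S^{2}\circ_{Q^\prime} S^{3})$ is span isomorphic to the same composite built along $T$; hence the two associations have equal isomorphism classes. I expect the delicate bookkeeping to lie in verifying that $T$ serves as an $\F$-pullback of both outer cospans---the pairing chases together with the identification of the twice-iterated pullback with the triple pullback in $\C^\prime$---while the rest reduces to the pasting property of pullbacks and repeated appeals to the single well-definedness lemma.
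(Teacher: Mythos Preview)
Your proposal is correct and follows essentially the same architecture as the paper: a well-definedness lemma for the choice of $\F$-pullback, independence of representatives, the unit argument via the span $(\mathrm{Id}_{\SA},\sR)$, and associativity via a comparator $\F$-pullback sitting over $S^2_A$ whose image in $\C^\prime$ is identified with the two iterated pullbacks by pasting.

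The one organizational difference worth noting is that you isolate span tightness to a single lemma and then reduce every subsequent step to an application of it, whereas the paper invokes span tightness directly inside the proofs of both the representative-independence lemma (Lemma~\ref{lemma:WD}) and the associativity lemma (Lemma~\ref{lemma:associative}). Concretely, for independence of representatives the paper builds morphisms $\Phi_1,\Phi_2,\Phi_3$ in $\C^\prime$ and then applies span tightness to extract $\Psi$; you instead observe that transporting the legs of $P$ along $\alpha,\beta$ yields an $\F$-pullback of $(s_R^\prime,q_L^\prime)$ for which the composite agrees with $S\circ_P Q$ on the nose, and then appeal to your lemma. For associativity the paper likewise applies span tightness twice to produce $\Psi_1,\Psi_2$; you recognize the comparator as an $\F$-pullback of each outer cospan and apply your lemma twice. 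Your packaging is slightly cleaner and makes the role of span tightness more transparent, but the underlying geometry---the comparator span of Diagram~\ref{dgrm:Comparator} and the pullback-pasting argument in $\C^\prime$---is the same.
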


If the functor $\F$ from $\C$ to $\C^\prime$ is span tight and $S$ and $Q$ are spans in $\C$ with $\SR$ equal to $\QL$, then there is an $\F$-pullback $P$ of the cospan $\(\sR, \qL\)$, and so there is an $\F$-pullback composite of $S$ and $Q$ along $P$.  The $\F$-pullback $P$ is, however, only defined up to a span isomorphism $\Phi$.  The following lemma shows that changing $P$, up to an isomorphism, changes the resulting composite span only up to a span isomorphism in $\C$.

\begin{lemma}\label{lemma:WDP}
Suppose that $\F$ is span tight, that $S$ and $Q$ are spans in $\C$, and that $S\circ_{P^{i}}Q$ is an $\F$-pullback composite, with $i$ equal to $1$ or $2$.  There is a span isomorphism $\Phi$ in $\C$ from $S\circ_{P^{\1}}Q$ to $S\circ_{P^{\2}}Q$. 
\end{lemma}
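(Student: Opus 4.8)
The plan is to produce the required span isomorphism directly from a span isomorphism between the $\F$-pullbacks $P^{\1}$ and $P^{\2}$ themselves, which is exactly what span tightness supplies. First I would record that, by the hypothesis that $S\circ_{P^{i}}Q$ is an $\F$-pullback composite, each $P^{i}$ is an $\F$-pullback of the single cospan $\bigl(\sR, \qL\bigr)$; hence $\F(P^{\1})$ and $\F(P^{\2})$ are both pullbacks of the cospan $\F\bigl((\sR,\qL)\bigr)$ in $\C^\prime$.

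Since pullbacks of a fixed cospan in any category agree up to a unique span isomorphism, there is a unique span isomorphism from $\F(P^{\1})$ to $\F(P^{\2})$ in $\C^\prime$. The span tightness of $\F$ then lifts this isomorphism back to $\C$: there is a span isomorphism $\Psi$ from $P^{\1}$ to $P^{\2}$ with $\F(\Psi)$ equal to the given isomorphism. Unwinding the definition of a span morphism, $\Psi$ is an isomorphism from the apex of $P^{\1}$ to the apex of $P^{\2}$ satisfying
\[\pL^{\1} = \pL^{\2}\circ\Psi \quad\text{and}\quad \pR^{\1} = \pR^{\2}\circ\Psi.\]

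I would then claim that this same $\Psi$ is the desired span isomorphism from $S\circ_{P^{\1}}Q$ to $S\circ_{P^{\2}}Q$. By definition the composite $S\circ_{P^{i}}Q$ has the apex of $P^{i}$ as its apex and $\sL\circ\pL^{i}$, $\qR\circ\pR^{i}$ as its legs, so the verification reduces to checking
\[\sL\circ\pL^{\1} = \bigl(\sL\circ\pL^{\2}\bigr)\circ\Psi \quad\text{and}\quad \qR\circ\pR^{\1} = \bigl(\qR\circ\pR^{\2}\bigr)\circ\Psi,\]
each of which follows from the factorizations above by associativity of composition. As $\Psi$ is already an isomorphism, it is a span isomorphism between the two composites.

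The only genuine content lies in the second step: span tightness is precisely the hypothesis needed to realize the isomorphism of the $\F$-images in $\C^\prime$ by an actual morphism of $\C$, and without it one could not even guarantee that any map exists in $\C$ relating the two apices. The remaining verification is a routine diagram chase; the single point requiring care is tracking the direction of $\Psi$ so that the factorizations $\pL^{\1} = \pL^{\2}\circ\Psi$ and $\pR^{\1} = \pR^{\2}\circ\Psi$ are oriented correctly and the composites of $\Psi$ with the legs of $P^{\2}$ collapse to the legs of $P^{\1}$.
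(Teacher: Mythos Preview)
Your proposal is correct and matches the paper's own proof essentially line for line: both use span tightness to lift the unique span isomorphism between $\F(P^{\1})$ and $\F(P^{\2})$ to a span isomorphism $\Psi$ in $\C$, then precompose the legs of the composites with $\Psi$ to verify it is a span isomorphism from $S\circ_{P^{\1}}Q$ to $S\circ_{P^{\2}}Q$.
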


\begin{proof}
\begin{pf}
Since $P^{\1}$ and $P^{\2}$ are both $\F$-pullbacks of the cospan $(\sR, \qL)$, there is a span isomorphism $\Phi$ in $\C^\prime$ from $\F\!\(P^{\1}\)$ to $\F\!\(P^{\2}\)$.  Since $\F$ is span tight, there is a span isomorphism $\Psi$ in $\C$ from $P^{\1}$ to $P^{\2}$ with $\F(\Psi)$ equal to $\Phi$, and so \begin{equation}\label{V:Lem:diffFPBs}\pL^{\1} = \pL^{\2}\circ\Psi \quad {\rm and}\quad \pR^{\1} = \pR^{\2}\circ\Psi.\end{equation}  Left compose $\sL$ with both sides of the left equality of \eqref{V:Lem:diffFPBs} and $\qR$ with both sides of the right equality of \eqref{V:Lem:diffFPBs} to obtain the respective equalities \[\sL\circ\pL^{\1} = \sL\circ\pL^{\2}\circ\Psi \quad {\rm and}\quad \qR\circ\pR^{\1} = \qR\circ\pR^{\2}\circ\Psi\] that demonstrate that $\Psi$ is a span isomorphism from $S\circ_{P^{\1}}Q$ to $S\circ_{P^{\2}}Q$.
\end{pf}
\end{proof}

\begin{lemma}\label{lemma:WD}
Suppose that $\F$ is span tight, that $S^{i}$ and $Q^{i}$ are spans in $\C$, and that $S^{i}\circ_{P^{i}} Q^{i}$ is an $\F$-pullback composite, with $i$ equal to $1$ or $2$.  Suppose that $S^{\1}$ and $Q^{\1}$ are respectively span isomorphic to $S^{\2}$ and $Q^{\2}$.  There is a span isomorphism in $\C$ between spans $S^{\1}\circ_{P^{\1}} Q^{\1}$ and $S^{\2}\circ_{P^{\2}} Q^{\2}$.
\end{lemma}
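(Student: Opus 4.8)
The plan is to build, out of the $\F$-pullback $P^{\1}$, a new $\F$-pullback of the cospan $\(\sR^{\2},\qL^{\2}\)$ whose associated composite is \emph{equal} to $S^{\1}\circ_{P^{\1}}Q^{\1}$, and then to appeal to Lemma~\ref{lemma:WDP} to pass from that $\F$-pullback to the given one, $P^{\2}$.

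First I would record the data. Let $\alpha$ be a span isomorphism in $\C$ from $S^{\1}$ to $S^{\2}$ and $\beta$ a span isomorphism from $Q^{\1}$ to $Q^{\2}$, so that
\[\sL^{\1}=\sL^{\2}\circ\alpha,\quad \sR^{\1}=\sR^{\2}\circ\alpha,\quad \qL^{\1}=\qL^{\2}\circ\beta,\quad \qR^{\1}=\qR^{\2}\circ\beta.\]
By the definition of a span isomorphism, $\SL^{\1}=\SL^{\2}$, $\SR^{\1}=\SR^{\2}$, $\QL^{\1}=\QL^{\2}$, and $\QR^{\1}=\QR^{\2}$, so $S^{\2}$ and $Q^{\2}$ are composable because $S^{\1}$ and $Q^{\1}$ are. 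Define the span $\tilde P$ with apex $\PA^{\1}$ by $\tilde P=\(\alpha\circ\pL^{\1},\ \beta\circ\pR^{\1}\)$. A one-line computation using the identities above together with the pairing of $P^{\1}$ with $\(\sR^{\1},\qL^{\1}\)$ shows that $\tilde P$ is paired with $\(\sR^{\2},\qL^{\2}\)$, namely
\[\sR^{\2}\circ\alpha\circ\pL^{\1}=\sR^{\1}\circ\pL^{\1}=\qL^{\1}\circ\pR^{\1}=\qL^{\2}\circ\beta\circ\pR^{\1}.\]

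The crux of the argument is to show that $\tilde P$ is in fact an $\F$-pullback of $\(\sR^{\2},\qL^{\2}\)$, that is, that $\F(\tilde P)$ is a pullback of $\F\(\sR^{\2},\qL^{\2}\)$ in $\C^\prime$. Since $\F$ is a functor, the morphisms $\F(\alpha)$ and $\F(\beta)$ are isomorphisms, $\F(\tilde P)=\(\F(\alpha)\circ\F(\pL^{\1}),\ \F(\beta)\circ\F(\pR^{\1})\)$, and $\F\(\sR^{\1}\)=\F\(\sR^{\2}\)\circ\F(\alpha)$, $\F\(\qL^{\1}\)=\F\(\qL^{\2}\)\circ\F(\beta)$. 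The content is then the standard fact that post-composing the legs of a pullback span by isomorphisms, while pre-composing the corresponding legs of its cospan by the inverse isomorphisms, again produces a pullback. I would prove this directly from the universal property of the pullback $\F(P^{\1})$ of $\F\(\sR^{\1},\qL^{\1}\)$: given any span $R$ in $\C^\prime$ paired with $\F\(\sR^{\2},\qL^{\2}\)$, twisting its two legs by $\F(\alpha)^{-1}$ and $\F(\beta)^{-1}$ produces a span paired with $\F\(\sR^{\1},\qL^{\1}\)$, which therefore factors uniquely through $\F(P^{\1})$; post-composing that factorization with $\F(\alpha)$ and $\F(\beta)$ yields the required span morphism from $R$ to $\F(\tilde P)$, and its uniqueness is inherited from that of the factorization through $\F(P^{\1})$. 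This verification of the pullback universal property is the step I expect to require the most care, as it is the only place where the pullback property is genuinely exercised.

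With $\tilde P$ known to be an $\F$-pullback, the rest is bookkeeping. Using $\sL^{\2}\circ\alpha=\sL^{\1}$ and $\qR^{\2}\circ\beta=\qR^{\1}$, the composite along $\tilde P$ is
\[S^{\2}\circ_{\tilde P}Q^{\2}=\(\sL^{\2}\circ\alpha\circ\pL^{\1},\ \qR^{\2}\circ\beta\circ\pR^{\1}\)=\(\sL^{\1}\circ\pL^{\1},\ \qR^{\1}\circ\pR^{\1}\)=S^{\1}\circ_{P^{\1}}Q^{\1},\]
so the two composites are literally the same span. Finally, $\tilde P$ and $P^{\2}$ are both $\F$-pullbacks of the single cospan $\(\sR^{\2},\qL^{\2}\)$, so $S^{\2}\circ_{\tilde P}Q^{\2}$ and $S^{\2}\circ_{P^{\2}}Q^{\2}$ are $\F$-pullback composites of $S^{\2}$ and $Q^{\2}$, and Lemma~\ref{lemma:WDP} provides a span isomorphism in $\C$ between them. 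Combining this isomorphism with the equality of the previous display gives the desired span isomorphism from $S^{\1}\circ_{P^{\1}}Q^{\1}$ to $S^{\2}\circ_{P^{\2}}Q^{\2}$.
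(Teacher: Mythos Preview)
Your proof is correct and follows essentially the same strategy as the paper: both twist the legs of one $\F$-pullback by the span isomorphisms $\alpha,\beta$ to obtain an $\F$-pullback of the other cospan, verify the pullback universal property after applying $\F$, and then use span tightness to descend the resulting isomorphism to $\C$. The only organizational difference is that you package the final step as an invocation of Lemma~\ref{lemma:WDP} (after observing $S^{\2}\circ_{\tilde P}Q^{\2}=S^{\1}\circ_{P^{\1}}Q^{\1}$), whereas the paper swaps indices---twisting $P^{\2}$ to an $\F$-pullback of $\(\sR^{\1},\qL^{\1}\)$---and then applies span tightness directly and computes by hand, effectively reproving that lemma inline; your route is a bit cleaner for reusing what was already established.
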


This diagram includes many of the mappings involved in the proof of Lemma~\ref{lemma:WD}:

\medskip

\begin{center} 
\begin{tikzpicture}[->=stealth',node distance=2.25cm, auto, scale=.8]
\node(SA) at (1.5,0) {$\SA^{\2}$};
\node(SL) at (0,2) {$\SL^{\1}$};
\draw[->] (SA) to node[yshift = -6, inner sep = 2, left] {$\sL^{\2}$} (SL);

\node(QR) at (4.5,0) {$\QA^{\2}$};
\node(QA) at (3,2) {$\SR^{\1} = \QL^{\1}$};
\draw[->] (SA) to node[yshift = -6, inner sep = 2, right] {$\sR^{\2}$} (QA);
\draw[->] (QR) to node[yshift = -6, inner sep = 2, left] {$\qL^{\2}$} (QA);

\node(TA) at (6,2) {$\QR^{\1}$};
\draw[->] (QR) to node[inner sep = 8, right] {$\qR^{\2}$} (TA);

\node[](PAa) at (1.5,4) {$\SA^{\1}$};
\draw[->] (PAa) to node[yshift = 6, inner sep = 2, left] {$\sL^{\1}$} (SL);
\draw[->] (PAa) to node[yshift = 6, inner sep = 2, right] {$\sR^{\1}$} (QA);
\node[](PAb) at (4.5,4) {$\QA^{\1}$};
\draw[->] (PAb) to node[yshift = 6, inner sep = 2, left] {$\qL^{\1}$} (QA);
\draw[->] (PAb) to node[yshift = 6, inner sep = 2, right] {$\qR^{\1}$} (TA);
\node[](PA) at (3,6) {$\PA^{\1}$};
\draw[->] (PA) to node[yshift = 6, inner sep = 2, left] {$\pL^{\1}$} (PAa);
\draw[->] (PA) to node[yshift = 6, inner sep = 2, right] {$\pR^{\1}$} (PAb);

\node[](PAaa) at (3,-2) {$\PA^{\2}$};
\draw[->] (PAaa) to node[yshift = -6, inner sep = 2, left] {$\pL^{\2}$} (SA);
\draw[->] (PAaa) to node[yshift = -6, inner sep = 2, right] {$\pR^{\2}$} (QR);

\draw (PAa) to node[inner sep = 4, left] {$\alpha$}  (SA);
\draw (PAb)  to node[inner sep = 4, right] {$\beta$}  (QR);

\draw[white, line width=4pt] (PA) edge[out=0,in=0, looseness = 1.5] (PAaa);
\draw[] (PA) edge[out=0,in=0, looseness = 1.5, dashed] (PAaa);

\end{tikzpicture}
\end{center} 

\begin{proof}
\begin{pf}

Take $\alpha$ and $\beta$ to be span isomorphisms from $S^{\1}$ to $S^{\2}$ and from $Q^{\1}$ to $Q^{\2}$, respectively.  The span $P^{\1}$ is an $\F$-pullback of $\(\sR^{\1}, \qL^{\1}\)$.  Since $\alpha$ and $\beta$ are span morphisms, the span $\(\alpha\circ\pL^{\1},\beta\circ\pR^{\1}\)$ is paired with $\(\sR^{\2}, \qL^{\2}\)$.  Since $\F\!\(P^{\2}\)$ is a pullback of $\(\F\!\(\sR^{\2}\), \F\!\(\qL^{\2}\)\)$, there is a span morphism $\Phi_1$ in $\C^\prime$ from $\(\F\!\(\alpha\circ\pL^{\1}\),\F\!\(\beta\circ\pR^{\1}\)\)$ to $\F\!\(P^{\2}\)$.

If $T$ is a span in $\C^\prime$ that is paired with $\(\F\!\(\sR^{\1}\), \F\!\(\qL^{\1}\)\)$, then there is a span morphism $\Phi_2$ in $\C^\prime$ from $T$ to $\F\!\(P^{\1}\)$.   The composite $\Phi_1\circ\Phi_2$ is a span morphism from $T$ to $\(\F\!\(\alpha^{-1}\circ\pL^{\2}\),\F\!\(\beta^{-1}\circ\pR^{\2}\)\)$, which is also paired with $\(\F\!\(\sR^{\1}\), \F\!\(\qL^{\1}\)\)$.  Uniqueness of the pullback of $\(\F\!\(\sR^{\1}\), \F\!\(\qL^{\1}\)\)$ up to a span isomorphism implies that there is a span isomorphism $\Phi_3$ in $\C^\prime$ from $\(\F\!\(\alpha^{-1}\circ\pL^{\2}\),\F\!\(\beta^{-1}\circ\pR^{\2}\)\)$ to $\F\!\(P^{\1}\)$.  Since $\F$ is span tight, there is a span isomorphism $\Psi$ in $\C$ such that $\F\!\(\Psi\)$ is $\Phi_3$.  Use the fact that $\Psi$ is a span isomorphism to obtain the equalities \begin{equation}\label{V:Lem:SpanIsoDifDif}\alpha^{-1}\circ \pL^{\2} = \pL^{\1}\circ \Psi \quad {\rm and}\quad \beta^{-1}\circ \pR^{\2} = \pR^{\1}\circ \Psi.\end{equation} The equalities \[\sL^{\2} = \sL^{\1}\circ \alpha^{-1} \quad {\rm and}\quad \qR^{\2} = \qR^{\1}\circ \beta^{-1}\] together with \eqref{V:Lem:SpanIsoDifDif} imply that \[\sL^{\2}\circ \pL^{\2} = \sL^{\1}\circ \alpha^{-1}\circ \pL^{\2} = \sL^{\1}\circ \pL^{\1}\circ \Psi\] and similarly that \[\qR^{\2}\circ \pR^{\2}  = \qR^{\1}\circ \pR^{\1}\circ \Psi,\] and so $\Psi$ is a span isomorphism from $S^{\2}\circ_{P^{\2}} Q^{\2}$ to $S^{\1}\circ_{P^{\1}} Q^{\1}$.
\end{pf}
\end{proof}

Lemma~\ref{lemma:WD} generalizes Lemma~\ref{lemma:WDP} and reduces to Lemma~\ref{lemma:WDP} when $S^1$ is equal to $S^2$, when $Q^1$ is equal to $Q^2$, and when $P^1$ and $P^2$ are pullbacks that are not necessarily equal to each other. 

\begin{lemma}\label{lemma:associative}
Suppose that $\F$ is span tight and that $S$, $Q$, and $T$ are spans in $\C$ with $\SR$ equal to $\QL$ and $\QR$ equal to $\TL$.  Suppose that $S\circ_{P^{\1}} Q$ and $Q\circ_{P^{\4}} T$ are $\F$-pullback composites and that $\(S\circ_{P^{\1}} Q\)\circ_{P^{\2}}T$ and $S\circ_{P^{\3}}\(Q\circ_{P^{\4}}T\)$ are also $\F$-pullback composites.  There is a span isomorphism from $\(S\circ_{P^{\1}} Q\)\circ_{P^{\2}}T$ to $S\circ_{P^{\4}}\(Q\circ_{P^{\3}}T\)$. 
\end{lemma}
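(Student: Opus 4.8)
The plan is to build a single ``central'' $\F$-pullback that resolves both bracketings at once and to compare it to each composite by span tightness. Write $P^{\1}$, $P^{\2}$, $P^{\3}$, $P^{\4}$ for the given $\F$-pullbacks, so that $P^{\1}$ is an $\F$-pullback of $\(\sR,\qL\)$, $P^{\4}$ of $\(\qR,\tL\)$, $P^{\2}$ of $\(\qR\circ\pR^{\1},\tL\)$, and $P^{\3}$ of $\(\sR,\qL\circ\pL^{\4}\)$. The right leg $\pR^{\1}\colon\PA^{\1}\to\QA$ and the left leg $\pL^{\4}\colon\PA^{\4}\to\QA$ share the target $\QA$, so $\(\pR^{\1},\pL^{\4}\)$ is a cospan in $\C$. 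Since $\C$ has $\F$-pullbacks (the standing hypothesis under which span tightness is defined), let $P^{\ast}=\(\pL^{\ast},\pR^{\ast}\)$ be an $\F$-pullback of this cospan, with $\pL^{\ast}\colon\PA^{\ast}\to\PA^{\1}$ and $\pR^{\ast}\colon\PA^{\ast}\to\PA^{\4}$.

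The heart of the argument is to show that $P^{\ast}$ is comparable to both $P^{\2}$ and $P^{\3}$. To that end I would form the two spans \[P^{\ast}_{\mathrm L}=\(\pL^{\ast},\,\pR^{\4}\circ\pR^{\ast}\)\quad\text{and}\quad P^{\ast}_{\mathrm R}=\(\pL^{\1}\circ\pL^{\ast},\,\pR^{\ast}\).\] The pairing of $P^{\ast}$ with $\(\pR^{\1},\pL^{\4}\)$ combined with the pairings of $P^{\1}$ and $P^{\4}$ yields \[\qR\circ\pR^{\1}\circ\pL^{\ast}=\tL\circ\pR^{\4}\circ\pR^{\ast}\quad\text{and}\quad \sR\circ\pL^{\1}\circ\pL^{\ast}=\qL\circ\pL^{\4}\circ\pR^{\ast},\] so that $P^{\ast}_{\mathrm L}$ is paired with $\(\qR\circ\pR^{\1},\tL\)$ and $P^{\ast}_{\mathrm R}$ with $\(\sR,\qL\circ\pL^{\4}\)$. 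I then verify in $\C^\prime$, by pasting the pullback square of $\F(P^{\ast})$ against those of $\F(P^{\4})$ and of $\F(P^{\1})$ respectively, that $\F(P^{\ast}_{\mathrm L})$ is a pullback of $\F\(\qR\circ\pR^{\1},\tL\)$ and $\F(P^{\ast}_{\mathrm R})$ is a pullback of $\F\(\sR,\qL\circ\pL^{\4}\)$. Thus $P^{\ast}_{\mathrm L}$ is an $\F$-pullback of the very cospan that defines $P^{\2}$, and $P^{\ast}_{\mathrm R}$ is an $\F$-pullback of the very cospan that defines $P^{\3}$.

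With the cospans matched, span tightness applies and gives span isomorphisms $\Psi_{\mathrm L}$ in $\C$ from $P^{\2}$ to $P^{\ast}_{\mathrm L}$ and $\Psi_{\mathrm R}$ in $\C$ from $P^{\3}$ to $P^{\ast}_{\mathrm R}$. Unwinding the defining identities $\pL^{\2}=\pL^{\ast}\circ\Psi_{\mathrm L}$, $\pR^{\2}=\pR^{\4}\circ\pR^{\ast}\circ\Psi_{\mathrm L}$ and $\pL^{\3}=\pL^{\1}\circ\pL^{\ast}\circ\Psi_{\mathrm R}$, $\pR^{\3}=\pR^{\ast}\circ\Psi_{\mathrm R}$ shows that $\Psi_{\mathrm L}$ is a span isomorphism from $\(S\circ_{P^{\1}}Q\)\circ_{P^{\2}}T$ onto the common span $R^{\ast}=\(\sL\circ\pL^{\1}\circ\pL^{\ast},\,\tR\circ\pR^{\4}\circ\pR^{\ast}\)$, while $\Psi_{\mathrm R}$ is a span isomorphism from $S\circ_{P^{\3}}\(Q\circ_{P^{\4}}T\)$ onto the same $R^{\ast}$. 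Composing $\Psi_{\mathrm R}^{-1}$ with $\Psi_{\mathrm L}$ and invoking Proposition~\ref{prop:spammap} then produces the required span isomorphism between the two iterated composites. The main obstacle is the construction of the central $\F$-pullback $P^{\ast}$ over $\(\pR^{\1},\pL^{\4}\)$ together with the two pasting verifications in $\C^\prime$: the natural comparison maps into the inner $\F$-pullbacks $P^{\1}$ and $P^{\4}$ exist a priori only in $\C^\prime$, so one cannot simply lift the evident $\C^\prime$-isomorphism between the composites directly, and the device of $P^{\ast}$ is precisely what lets each comparison reduce to a single span-tightness application over a genuine common cospan in $\C$. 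Once these identifications are in place, the remainder is bookkeeping, with Lemma~\ref{lemma:WD} ensuring that none of the conclusions depends on the particular representatives chosen.
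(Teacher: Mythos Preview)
Your proposal is correct and follows essentially the same strategy as the paper: the paper likewise constructs a ``comparator'' $\F$-pullback $P$ of the cospan $\(\pR^{\1},\pL^{\bullet}\)$ over $\QA$, shows via a pasting argument in $\C^\prime$ that the induced spans are $\F$-pullbacks of the same cospans as $P^{\2}$ and the other outer pullback, and then applies span tightness twice to obtain span isomorphisms onto the common span $\(\sL\circ\pL^{\1}\circ\pL,\,\tR\circ\pR^{\bullet}\circ\pR\)$. Your labeling of $P^{\3}$ and $P^{\4}$ is in fact more consistent with the lemma's hypotheses than the paper's own proof, which silently swaps those indices.
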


The mappings involved in the proof of Lemma~\ref{lemma:associative} appear in this commutative diagram:

\medskip

\begin{center} 
\begin{tikzpicture}[->=stealth',node distance=2.25cm, auto, scale=.8]
\node(SL) at (-1.5,0) {$\SL$};
\node(SR) at (1.5,0) {$\SR = \QL$};
\node(SA) at (0,2) {$\SA$};
\draw[->] (SA) to node[yshift = 6, inner sep = 2, left] {$\sL$} (SL);
\draw[->] (SA) to node[yshift = 6, inner sep = 2, right] {$\sR$} (SR);

\node(QR) at (4.5,0) {$\QR = \TL$};
\node(QA) at (3,2) {$\QA$};
\draw[->] (QA) to node[yshift = 6, inner sep = 2, left] {$\qL$} (SR);
\draw[->] (QA) to node[yshift = 6, inner sep = 2, right] {$\qR$} (QR);

\node(TR) at (7.5,0) {$\TR$};
\node(TA) at (6,2) {$\TA$};
\draw[->] (TA) to node[yshift = 6, inner sep = 2, left] {$\tL$} (QR);
\draw[->] (TA) to node[yshift = 6, inner sep = 2, right] {$\tR$} (TR);

\node(PAa) at (1.5,4) {$\PA^{\1}$};
\draw[->] (PAa) to node[yshift = 6, inner sep = 2, left] {$\pL^{\1}$} (SA);
\draw[->] (PAa) to node[yshift = 6, inner sep = 2, right] {$\pR^{\1}$} (QA);

\node(PAb) at (3,6) {$\PA^{\2}$};
\draw[->] (PAb) to node[yshift = 6, inner sep = 2, left] {$\pL^{\2}$} (PAa);
\draw[->] (PAb) to node[yshift = 6, inner sep = 2, right] {$\pR^{\2}$} (TA);

\end{tikzpicture}
\end{center}

\noindent and in this commutative diagram:

\medskip

\begin{center} %
\begin{tikzpicture}[->=stealth',node distance=2.25cm, auto, scale = .8]
\node(SL) at (-1.5,0) {$\SL$};
\node(SR) at (1.5,0) {$\SR = \QL$};
\node(SA) at (0,2) {$\SA$};
\draw[->] (SA) to node[yshift = 6, inner sep = 2, left] {$\sL$} (SL);
\draw[->] (SA) to node[yshift = 6, inner sep = 2, right] {$\sR$} (SR);

\node(QR) at (4.5,0) {$\QR = \TL$};
\node(QA) at (3,2) {$\QA$};
\draw[->] (QA) to node[yshift = 6, inner sep = 2, left] {$\qL$} (SR);
\draw[->] (QA) to node[yshift = 6, inner sep = 2,  right] {$\qR$} (QR);

\node(TR) at (7.5,0) {$\TR$};
\node(TA) at (6,2) {$\TA$};
\draw[->] (TA) to node[yshift = 6, inner sep = 2,  left] {$\tL$} (QR);
\draw[->] (TA) to node[yshift = 6, inner sep = 2, right] {$\tR$} (TR);

\node(PAa) at (1.5,4) {$\PA^{\1}$};
\draw[->] (PAa) to node[yshift = 6, inner sep = 2, left] {$\pL^{\1}$} (SA);
\draw[->] (PAa) to node[yshift = 8, inner sep = 1, right] {$\pR^{\1}$} (QA);

\node(PAb) at (4.5,4) {$\PA^{\3}$};
\draw[->] (PAb) to node[yshift = 8, inner sep = 1,  left] {$\pL^{\3}$} (QA);
\draw[->] (PAb) to node[yshift = 6, inner sep = 2, right] {$\pR^{\3}$} (TA);

\node(PA) at (3,6) {$\PA$};
\draw[->] (PA) to node[yshift = 6, inner sep = 2, left] {$\pL$} (PAa);
\draw[->] (PA) to node[yshift = 6, inner sep = 2, right] {$\pR$} (PAb);

\end{tikzpicture}
\end{center}

\begin{proof}
\begin{pf}
Suppose that $P^{\1}$ is an $\F$-pullback of the cospan $\(\sR, \qL\)$, that $P^{\3}$ is an $\F$-pullback of the cospan $\(\qR, \tL\)$, and that $P$ is an $\F$-pullback of the cospan $\big(\pR^{\1}, \pL^{\3}\big)$, where \[\PL = \PA^{\1} \quad {\rm and}\quad \PR = \PA^{\3}.\]  Suppose further that $P^{\2}$ is an $\F$-pullback of the cospan $\big(\qR\circ \pR^{\1}, \tL\big)$ and that $P^{\4}$ is an $\F$-pullback of the cospan $\big(\sR, \qL\circ \pL^{\3}\big)$.  Since $P^{\2}$ is an $\F$-pullback of the cospan $\big(\qR\circ \pR^{\1}, \tL\big)$, the span $\big(\pR^{\1} \circ \pL^{\2}, \pR^{\2}\big)$ is paired with the cospan $\big(\qR, \tL\big)$, and so $\big(\F(\pR^{\1} \circ \pL^{\2}), \F(\pR^{\2})\big)$ is paired with the cospan $\big(\F(\qR), \F(\tL)\big)$.  The span $P^{\3}$ is an $\F$-pullback, which implies the existence of a span morphism $\Phi_1$ in $\C^\prime$ from $\big(\F(\pR^{\1} \circ \pL^{\2}), \F(\pR^{\2})\big)$ to $\F\big(P^{\3}\big)$.  The span $\big(\F\big(p^{\2}_L\big), \Phi_1\big)$ is paired with $\big(\F(p^{\1}_R), \F(p^{\3}_L)\big)$, and so there is a span morphism $\Phi_2$ in $\C^\prime$ from $\big(\F(p^{\2}_L), \Phi_1\big)$ to $\F(P)$.  

If $U$ is a span paired with $\big(q_R\circ p^{\1}_R, t_L\big)$, then there is a span morphism $\Phi_3$ in $\C^\prime$ from $\F(U)$ to $\F(P^{\2})$.  The composite $\Phi_2\circ \Phi_3$ is a span morphism in $\C^\prime$ from $\F(U)$ to $\big(\F(p_L), \F(p^{\3}_R\circ p_R)\big)$, and so $\big(\F(p_L), \F(p^{\3}_R\circ p_R)\big)$ is a pullback in $\C^\prime$ of the cospan $\big(\F(\qR\circ p_R^{\1}), \F(\tL)\big)$.  There is, therefore, a span isomorphism in $\C^\prime$ from $\big(\F(\pL), \F(\pR^{\3}\circ\pR)\big)$ to $\F\big(P^{\2}\big)$.   Span tightness of $\F$ implies that there is a span isomorphism $\Psi_1$ in $\C$ from $\big(\pL, \pR^{\3}\circ\pR\big)$ to $P^{\2}$, with \[\pR^{\2}\circ \Psi_1 = \pR^{\3}\circ \pR \quad\text{and so} \quad \tR\circ\pR^{\2}\circ \Psi_1 = \tR\circ\pR^{\3}\circ \pR.\] The equality \[\pL^{\2}\circ \Psi_1 = \pL \quad\text{implies that} \quad \sL\circ\pL^{\1}\circ\pL^{\2}\circ \Psi_1 = \sL\circ\pL^{\1}\circ\pL.\] The isomorphism $\Psi_1$ in $\C$ is, therefore, a span isomorphism, with \begin{equation*}\big(\sL\circ\pL^{\1}\circ\pL, \tR\circ\pR^{\3}\circ \pR\big) = \big(\sL\circ\pL^{\1}\circ \pL^{\2}\circ \Psi_1, \tR\circ\pR^{\2}\circ \Psi_1\big),\end{equation*} and so $\big(\sL\circ\pL^{\1}\circ\pL, \tR\circ\pR^{\3}\circ \pR\big)$ and $\(S\circ_{P^{\1}} Q\)\circ_{P^{\2}}T$ are span isomorphic.

A similar argument shows that $\big(\sL\circ\pL^{\1}\circ\pL, \tR\circ\pR^{\3}\circ \pR\big)$ and $S\circ_{P^{\4}} \(Q \circ_{P^{\3}}T\)$ are span isomorphic, where $P^{\4}$ is an $\F$-pullback of the cospan $\(\sR, \qL\circ\pL^{\3}\)$.  Since $\(S\circ_{P^{\1}} Q\)\circ_{P^{\2}}T$ and $S\circ_{P^{\4}} \(Q \circ_{P^{\3}}T\)$ are span isomorphic to the same span, Proposition~\ref{prop:spammap} implies Lemma~\ref{lemma:associative}.
\end{pf}
\end{proof}

\begin{proof}[Proof of Theorem~\ref{thm:SpanCatIfTight}]
\begin{pf}
To prove the theorem, it suffices to show that the composition of morphisms in ${\rm Span}\!\(\C, \F\)$ is well defined, satisfies the left and right unit laws, and is associative.

For any two isomorphism classes of spans $[S^{\1}]$ and $[S^{\2}]$, where the source of $\left[S^{\1}\right]$ is the target of $\left[S^{\2}\right]$, if $S^{\1}$ and $S^{\2}$ are, respectively, representatives of $\left[S^{\1}\right]$ and $\left[S^{\2}\right]$, then there is an $\F$-pullback $P$ of $\big(\sR^{\1}, \sL^{\2}\big)$.  Hence, there exists a composite $S^{\1} \circ_PS^{\2}$.  Lemma~\ref{lemma:WDP} implies that the equivalence class $\left[S^{\1} \circ_PS^{\2}\right]$ is independent of $P$.  Lemma~\ref{lemma:WD} additionally implies that $\left[S^{\1} \circ_PS^{\2}\right]$ is independent of the choice of representatives $S^{\1}$ and $S^{\2}$. Furthermore, $\SR^{\2}$ is the source of $\left[S^{\1}\right] \circ \left[S^{\2}\right]$ and $\SL^{\1}$ is the target of $\left[S^{\1}\right] \circ \left[S^{\2}\right]$, and so the composition $\circ$ is well defined. 

Suppose that $[S]$ is an isomorphism class of spans in $\C$ and that $\left[{\rm I}_{\SR}\right]$ is the isomorphism class of spans containing $\({\rm Id}_{\SR}, {\rm Id}_{\SR}\)$, where \[{\rm Id}_{\SR}\colon \SR \to \SR\] is the identity map from $\SR$ to $\SR$. Take $P$ to be the span $\({\rm Id}_{\SA}, \sR\)$.  For any span $Q$ in $\C^\prime$ that is paired with $\(\F\!\(\sR\), \F\!\({\rm Id}_{\SR}\)\)$, \[\F\!\(\sR\) \circ \qL = \F\!\({\rm Id}_{\SR}\) \circ \qR = \qR\] and so $\qL$ is a span morphism in $\C^\prime$ from $Q$ to $\F(P)$.  Given any other span morphism $\Phi$ in $\C^\prime$ from $Q$ to $\F(P)$, \[\qL = \F\!\({\rm Id}_{\SA}\)\circ \Phi= {\rm Id}_{\F(\SA)}\circ \Phi = \Phi.\] The span morphism in $\C^\prime$ from $Q$ to $\F(P)$ is unique, which implies that $\F\!\(P\)$ is a pullback in $\C^\prime$ of $\(\F\!\(\sR\), \F\!\({\rm Id}_{\SR}\)\)$, and so $P$ is an $\F$-pullback of $\(\sR, {\rm Id}_{\SR}\)$.  

The composite $S\circ_P {\rm I}_{\SR}$ is determined by these mappings:

\medskip

\begin{center} 
\begin{tikzpicture}[->=stealth',node distance=2.25cm, auto,  scale=.8]
\node[](SL) at (-3.6,0) {$\SL$};
\node[](SR) at (0,0) {$\SR$};
\node[](SA) at (-1.8,2) {$\SA$};
\draw[->] (SA) to node[yshift = 6, inner sep = 2, left] {$\sL$} (SL);
\draw[->] (SA) to node[yshift = 8, inner sep = 0, right] {$\sR$} (SR);

\node(RR) at (3.6,0) {$\SR$};
\node[](RA) at (1.8,2) {$\SR$};
\draw[->] (RA) to node[yshift = 8, inner sep = 0, left] {${\rm Id}_{\SR}$} (SR);
\draw[->] (RA) to node[yshift = 6, inner sep = 2, right] {${\rm Id}_{\SR}$} (RR);

\node(QA) at (0,4) {$\SA$};
\draw[->] (QA) to node[yshift = 6, inner sep = 2, left] {${\rm Id}_{\SA}$} (SA);
\draw[->] (QA) to node[yshift = 6, inner sep = 2, right] {$\sR$} (RA);

\end{tikzpicture}
\end{center}

\medskip

\noindent and is this diagram:

\begin{center} 
\begin{tikzpicture}[->=stealth',node distance=2.25cm, auto, scale=.8]
\node(SL) at (-1.8,0) {$\SL$};

\node(RR) at (1.8,0) {$\SR$};

\node(QA) at (0,2) {$\SA$};
\draw[->] (QA) to node[inner sep = 8, left] {$\sL$} (SL);
\draw[->] (QA) to node[inner sep = 8, right] {$\sR$} (RR);

\end{tikzpicture}
\end{center}

\noindent Since composition is well defined and $S\circ_P {\rm I}_{\SR}$ is span isomorphic in $\C$ to $S$, the composite $\left[S\right]\circ\left[{\rm I}_{\SR}\right]$ is equal to $\left[S\right]$.  Similar arguments will show that $\left[{\rm I}_{\SL}\right]\circ\left[S\right]$ is equal to $\left[S\right]$, and so ${\rm Span}\!\(\C, \F\)$ has both a right and left unit law.

Lemma~\ref{lemma:associative} implies that $\circ$ is associative.
\end{pf}
\end{proof}

\section{Examples}\label{sec:app}


\subsection{Categories that have Pullbacks} Suppose that $\C$ is a category that has pullbacks, and let $\F$ be the identity functor from $\C$ to $\C$.  The functor $\F$ is span tight and so ${\rm Span}(\C,F)$ is a category.  Since every $\F$-pullback of a cospan is a pullback of a cospan, the category ${\rm Span}(\C,\F)$ is the category ${\rm Span}(\C)$.  In this way, the concept of a generalized span category ${\rm Span}(\C, \F)$ generalizes the notion of a span category and reduces to it when $\C$ has pullbacks and $\F$ is the identity functor. 

\subsection{Smooth Manifolds and Surjective Submersions} Suppose that $\F$ is the inclusion functor from $\SurjSub$ to $\Dif$.  Theorems~\ref{SurjSubTight} and \ref{thm:SpanCatIfTight} together imply that ${\rm Span}(\SurjSub, \F)$ is a category.

\subsection{Classical Mechanics} In \cite{BWY}, we worked in the categories $\Riem$, whose objects are Riemannian manifolds and whose morphisms are surjective Riemannian submersions, and $\SympSurj$, whose objects are symplectic manifolds and whose morphisms are surjective Poisson maps.  Unlike $\SurjSub$, these categories are not subcategories of $\Dif$.  However, the forgetful functors from these categories into $\Dif$ are still span tight, and so it is possible to construct generalized span categories in these settings which are critical to the study of classical mechanics.  In \cite{BWY}, we introduced the notion of an \emph{augmented generalized span category}.  Such categories are critical to the categorification of classical mechanics and the study of the functoriality of the Legendre transformation.


\end{document}